\newtheorem{lem}{Lemma}[section]
\newtheorem{thm}[lem]{Theorem}
\newtheorem{cor}[lem]{Corollary}
\newtheorem{rem}[lem]{Remark}
\newtheorem{Proposition}[lem]{Proposition}
\newtheorem{Conjecture}[lem]{Conjecture}
\begin{document}

\title{Minimum degree and size conditions for the proper connection number of graphs}

\author{ Xiaxia Guan$^a$, \quad Lina Xue$^a$, \quad Eddie Cheng$^b$, \quad Weihua Yang$^a$\footnote{Corresponding author. E-mail: ywh222@163.com,~yangweihua@tyut.edu.cn}\\
\\ \small $^a$Department of Mathematics, Taiyuan University of
Technology,\\
\small  Taiyuan Shanxi-030024,
China\\
\small $^b$Department of Mathematics and Statistics, Oakland University, Rochester, MI 48309, USA}
\date{}
\maketitle

{\small{\bf Abstract:}  
An edge-coloured graph $G$ is called $properly$ $connected$ if every two vertices are connected by a proper path. The $proper$ $connection$ $number$ of a connected graph $G$, denoted by $pc(G)$, is the smallest number of colours that are needed in order to make $G$ properly connected. Susan A. van Aardt et al.
gave a sufficient condition for the proper connection number to be at most $k$ in terms of the size of graphs. In this note, 
our main result is the following, by adding a minimum degree condition: Let $G$ be a connected graph of order $n$, $k\geq3$. If $|E(G)|\geq \binom{n-m-(k+1-m)(\delta+1)}{2} +(k+1-m)\binom{\delta+1}{2}+k+2$, then $pc(G)\leq k$, where $m$ takes the value $t$ if $\delta=1$ and $\lfloor \frac{k}{\delta-1} \rfloor$ if $\delta\geq2$.  Furthermore, if $k=2$ and $\delta=2$, 
$pc(G)\leq 2$, except $G\in \{G_{1}, G_{n}\}$ ($n\geq8$), where $G_{1}=K_{1}\vee 3K_{2}$ and  $G_{n}$ is obtained by taking a complete graph $K_{n-5}$  and $K_{1}\vee (2K_{2}$) with an arbitrary vertex of $K_{n-5}$ and a vertex with $d(v)=4$ in $K_{1}\vee (2K_{2}$) being joined. If  $k=2$, $\delta \geq 3$,
we conjecture $pc(G)\leq 2$, where $m$ takes the value $1$ if $\delta=3$ and $0$ if $\delta\geq4$ in the assumption.

\vskip 0.5cm  Keywords: Proper connection number; Minimum degree; Edge number

\section{Introduction}

All graphs in this work are simple, finite connected and undirected. We follow \cite{Boundy} for graph theoretical notations  not defined here. Let $G$ be a  connected graph, we denote by $c(G)$ the $circumference$ of $G$, i.e., the order of a longest cycle of $G$, and by $p(G)$ the $detour$ $number$ of $G$, i.e., the order of a longest path of $G$.

As the extension of proper colourings are motivated by rainbow connections of graphs, Andrews et al.\cite{Andrews} and, independently, Borozan et al.\cite{Borozan} introduced the concept of proper connections in graphs. An edge-coloured graph $G$ is called $rainbow$-$connected$\cite{Chartrand} if every two vertices are connected by a path whose edges have different colours.  The $rainbow$ $connection$ $number$ of a connected graph $G$, denoted by $rc(G)$, is the smallest number of colours that are needed in order to make $G$ rainbow connected. An easy observation is that if $G$ has $n$ vertices then $rc(G)\leq n-1$, since one may colour the edges of a given spanning tree of $G$ with different colours, and colour the remaining edges with one of the already used colours.

A path in an edge-coloured graph is called a $proper$ $path$ if no two adjacent edges of the path are colored with one same color. An edge-colored graph $G$ is called $properly$ $connected$ if every pair of distinct vertices of $G$ is connected by a path whose edges are properly coloured. For a connected graph $G$, the $proper$ $connection$ $number$ of $G$, denoted by $pc(G)$, is defined as the smallest number of colors that are needed in order to make $G$ properly connected.

The proper connection of graphs has the following application. When building a communication network of wireless signal towers, one fundamental requirement is that the network be connected. If there cannot be a direct connection between two towers $A$ and $B$, say for example if there is a mountain in between, there must be a route through other towers to get from $A$ to $B$. As a wireless transmission passes through a signal tower, to avoid interference, it would be helpful if the incoming signal and the outgoing signal do not share the same frequency. Suppose that we assign a vertex to each signal tower, an edge between two vertices if the corresponding signal towers are directly connected by a signal and assign a color to each edge based on the assigned frequency used for the communication, then, the number of frequencies needed to assign to the connections between towers so that there is always a path avoiding interference between each pair of towers is precisely the proper connection number of the corresponding graph \cite{Huang}.

For the proper connection number of $G$, the following results are known.

\begin{Proposition}\label {Proposition 1.}
Let $G$ be a  connected graph of order $n$ (number of vertices) and size $m$ (number of edges). Then

(1) $1\leq pc(G)\leq \min\{\chi^{\prime}(G), rc(G)\}\leq m$, where $\chi^{\prime}(G)$ is the edge chromatic number of $G$,

(2) $pc(G) = 1$ if and only if $G = K_{n}$,

(3) $pc(G) = m$ if and only if $G \cong K_{1,m}$ is a star of size $m$,

(4) If $G$ is a tree, then $pc(G) = \Delta(G)$,

(5) If $G$ is traceable, i.e., there exists Hamilton path in $G$, then $pc(G)\leq 2$.

\end{Proposition}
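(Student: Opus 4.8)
The plan is to reduce all five statements to two elementary principles: in a proper edge-colouring every path is automatically a proper path (consecutive edges are adjacent, hence distinctly coloured), and a rainbow path is in particular a proper path. For (1), the bound $1\le pc(G)$ holds because at least one colour is used; $pc(G)\le\chi'(G)$ follows from the first principle applied to a proper edge-colouring of the connected graph $G$; $pc(G)\le rc(G)$ follows from the second principle applied to a rainbow colouring; and $\min\{\chi'(G),rc(G)\}\le m$ is witnessed by colouring all $m$ edges with distinct colours, which is at once a proper edge-colouring and a rainbow colouring.

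For (2), colouring every edge of $K_n$ with a single colour leaves each pair of vertices joined by one edge, a trivially proper path, so $pc(K_n)=1$; conversely, with one colour a proper path has length at most $1$, so $pc(G)=1$ forces every pair of vertices to be adjacent, that is $G=K_n$. I would establish (4) next, since (3) relies on it. Taking a vertex $v$ of degree $\Delta(G)$ with neighbours $u_1,\dots,u_\Delta$, the unique path between $u_i$ and $u_j$ in a tree is $u_i v u_j$, so the edges at $v$ must be pairwise distinctly coloured; this gives $pc(G)\ge\Delta(G)$. Since a tree is bipartite and hence Class $1$, one has $\chi'(G)=\Delta(G)$, and (1) supplies the matching bound $pc(G)\le\chi'(G)=\Delta(G)$.

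Statement (3) then splits naturally. The forward direction is immediate from (4): the star $K_{1,m}$ is a tree with $\Delta=m$, so $pc(K_{1,m})=m$. The converse is the one step I expect to require an actual idea rather than a direct colouring argument. Here I would invoke the observation recalled in the introduction, $pc(G)\le rc(G)\le n-1$, together with $m\ge n-1$ for a connected graph; then $pc(G)=m$ forces $m=n-1$, so $G$ is a tree, whereupon (4) gives $m=pc(G)=\Delta(G)$, and a tree on $m+1$ vertices with maximum degree $m$ is exactly a single vertex adjacent to all the others, that is $G\cong K_{1,m}$.

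Finally, for (5) I would fix a Hamilton path $v_1v_2\cdots v_n$, colour its edges alternately with two colours, and colour every remaining edge arbitrarily; for any $i<j$ the subpath $v_i\cdots v_j$ inherits the alternating pattern and is therefore proper, so every pair is properly connected and $pc(G)\le2$. The whole proposition is thus routine except for the converse of (3), where the decisive move is to bound $pc(G)$ through the rainbow-connection inequality $rc(G)\le n-1$ and combine it with the connectivity bound $m\ge n-1$, rather than to reason about proper colourings directly.
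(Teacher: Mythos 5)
Your proof is correct in all five parts. Note that the paper itself offers no proof of this Proposition: it is stated as a compilation of known results from the literature (Andrews et al.\ and Borozan et al.), so there is no argument in the paper to compare yours against. Your reductions are the standard ones and they all check out: the two guiding principles (a properly edge-coloured graph has every path proper, and a rainbow path is proper) give (1); the one-colour analysis gives (2); the tree lower bound via the unique path $u_i v u_j$ together with K\"onig's theorem ($\chi'(T)=\Delta(T)$ for bipartite $T$) gives (4); the alternating two-colouring of a Hamilton path gives (5); and your handling of the converse of (3) -- combining $pc(G)\le rc(G)\le n-1$ (which the paper's introduction justifies via a spanning tree) with $m\ge n-1$ for connected graphs to force $G$ to be a tree, then invoking (4) -- is exactly the right idea and is logically sound, since the ordering (1), (2), (4), (3), (5) avoids any circularity.
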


For each pair of positive integers $n$ and $k$, we define  $g(n,k)$ to be  the smallest integer such that every connected graph of order $n$ and size at least $g(n,k)$ has proper connection number at most $k$. Huang, Li, and wang \cite{Wang} showed that $g(n,k)=\binom{n-k-1}{2}+k+2$ for $k=2$, $n\geq14$, and for $k=3$, $n\geq14$. In this paper we consider the function $g(n,k)$ by adding a minimum degree condition.

The analogous problem for rainbow connections was introduced in \cite{Kemnitz} and results on that problem appeared in \cite{Przybylo,Kemnitz,Li 2,Li 3,I}.

\section{Auxiliary results}

We shall use the following result of Andrews et al. \cite{Andrews}.

\begin{lem}[\cite{Andrews}]\label{lem2.1}
If $G$ is a  connected graph and $H$ is a connected spanning subgraph of $G$, then $pc(G)\leq pc(H)$. In particular, $pc(G)\leq pc(T)$ for every spanning tree $T$ of $G$.
\end{lem}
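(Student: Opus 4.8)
The plan is to exploit the fact that a proper connection colouring of the smaller graph $H$ can be transplanted onto $G$ without creating any new colour requirements. First I would set $k = pc(H)$ and fix an optimal proper connection colouring $c_H \colon E(H) \to \{1,\dots,k\}$, so that by definition every pair of vertices of $H$ is joined by a properly coloured path lying in $H$.

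Next I would extend $c_H$ to a colouring $c_G$ of the whole of $G$ by keeping the colours already assigned on the edges of $H$ and giving to each edge of $E(G)\setminus E(H)$ an arbitrary colour from the palette $\{1,\dots,k\}$ (for definiteness, colour $1$). Since $H$ is a \emph{spanning} subgraph we have $V(G)=V(H)$, so $c_G$ is a well-defined $k$-colouring of $G$ whose restriction to $E(H)$ equals $c_H$.

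The key observation is then that properness is inherited along any path that lives entirely inside $H$. Given two vertices $u,v\in V(G)=V(H)$, the colouring $c_H$ furnishes a properly coloured $u$--$v$ path $P$ in $H$. Because $c_G$ agrees with $c_H$ on every edge of $H$, the path $P$ survives in $G$ and remains properly coloured under $c_G$, i.e.\ no two consecutive edges of $P$ receive the same colour. Hence $G$ is properly connected using only $k$ colours, which yields $pc(G)\le k = pc(H)$.

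Finally, the ``in particular'' clause is immediate: a spanning tree $T$ of a connected graph $G$ is itself a connected spanning subgraph, so applying the statement with $H=T$ gives $pc(G)\le pc(T)$. The whole argument is constructive and involves no extremal estimates, so I do not expect a serious obstacle; the single point deserving care is to confirm that recolouring the edges outside $H$ cannot spoil the proper $u$--$v$ connections we depend on, which is precisely why those paths are selected inside $H$, where $c_G$ coincides with $c_H$.
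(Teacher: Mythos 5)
Your proof is correct: the paper itself states this lemma without proof (citing Andrews et al.), and your argument --- fix an optimal proper connection colouring of $H$, extend it arbitrarily to the edges of $E(G)\setminus E(H)$, and observe that every proper path in $H$ remains a proper path in $G$ since properness depends only on the colours of consecutive edges of the path itself --- is exactly the standard argument behind this monotonicity fact. No gaps; the spanning hypothesis is used precisely where you use it, to guarantee $V(G)=V(H)$ so that every pair of vertices of $G$ is covered.
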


In fact, Lemma~\ref{lem2.1} also states that the $proper$ $connection$ $number$ is monotonic under adding edges.

\begin{Proposition}[\cite{Borozan}]\label{Proposition3}
If a graph $G$ contains a vertex $v$ such that $d(v)\geq 2$ and $pc(G-v)\leq 2$, then $pc(G)\leq 2$.
\end{Proposition}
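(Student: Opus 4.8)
The plan is to take a proper $2$-edge-colouring $c$ of $G-v$ that witnesses $pc(G-v)\le 2$, to keep it unchanged on all edges of $G-v$, and to colour only the edges incident with $v$ (with colours from $\{1,2\}$) so that all of $G$ becomes properly connected. The first observation is that nothing is lost among the old vertices: any proper $x$--$y$ path with $x,y\in V(G)\setminus\{v\}$ that exists in $G-v$ is still a proper path in $G$, since we have only added $v$ and its incident edges (this is the elementary content behind Lemma~\ref{lem2.1}, monotonicity under adding edges). Hence the only pairs still lacking a proper path are the pairs $\{v,w\}$ with $w\in V(G)\setminus\{v\}$, and it suffices to connect $v$ properly to every other vertex.

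Since $d(v)\ge 2$, I would fix two neighbours $u_1,u_2$ of $v$ (colouring any remaining edges at $v$ arbitrarily). The natural way to reach a target $w$ from $v$ is to prepend the edge $vu_i$ to a proper $u_i$--$w$ path in $G-v$; this yields a proper path exactly when the colour of $vu_i$ differs from the colour of the first edge (the edge at $u_i$) of that $u_i$--$w$ path. For $i\in\{1,2\}$ let $S_i(w)\subseteq\{1,2\}$ be the set of colours that can occur as this first edge over all proper $u_i$--$w$ paths in $G-v$; since $G-v$ is properly connected, $S_i(w)\neq\emptyset$ for every $w$. Writing $c(vu_i)=\alpha_i$, the vertex $w$ acquires a proper $v$--$w$ path as soon as the colour opposite to $\alpha_1$ lies in $S_1(w)$, or the colour opposite to $\alpha_2$ lies in $S_2(w)$.

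I would first record the easy win: if some neighbour $u_i$ is \emph{colour-universal}, meaning there is a colour $\gamma$ with $\gamma\in S_i(w)$ for all $w$, then setting $c(vu_i)$ to the opposite colour connects $v$ to every vertex through $u_i$ alone, and we are finished. Otherwise, one checks that a vertex $w$ fails under a given assignment $(\alpha_1,\alpha_2)$ precisely when $S_1(w)=\{\alpha_1\}$ and $S_2(w)=\{\alpha_2\}$; thus each $w$ can block at most one of the four possible assignments, and the construction succeeds unless all four \emph{type classes} $\{w:S_1(w)=\{a\},\ S_2(w)=\{b\}\}$ for $(a,b)\in\{1,2\}^2$ are simultaneously non-empty.

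The main obstacle is exactly to rule out this last configuration, since with only two neighbours there are only four colourings of the edges at $v$ and each is killed by its own type class. Here I would exploit two extra degrees of freedom: I may re-choose the witnessing $2$-colouring of $G-v$, and I may use that a proper $v$--$w$ path can run through the \emph{other} neighbour, so the internal colouring of $G-v$ need not be proper-connecting on its own. I expect to argue that the colouring and the pair $u_1,u_2$ can always be chosen so that at least one type class is empty: the existence of vertices in the two diagonal classes forces both $u_1$ and $u_2$ to be incident with edges of both colours, and a local (Kempe-type) recolouring of the edges at such a neighbour should empty one class without destroying proper connectivity. Verifying that such a recolouring is always available, and that it does not merely move the bad configuration to another pair of colours, is the delicate point the argument must settle.
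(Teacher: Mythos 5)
The paper does not prove this statement at all -- it is imported verbatim from \cite{Borozan} -- so there is no in-paper argument to compare against; your proposal must therefore stand on its own, and it does not. Your reduction is fine as far as it goes: freezing a witnessing $2$-colouring of $G-v$, observing that only the pairs $\{v,w\}$ need attention, introducing the sets $S_i(w)$ of available first-edge colours at $u_i$, and noting that $w$ blocks the assignment $(\alpha_1,\alpha_2)$ exactly when $S_1(w)=\{\alpha_1\}$ and $S_2(w)=\{\alpha_2\}$, so that each $w$ kills at most one of the four assignments. But this only shows the construction fails precisely when all four type classes are simultaneously non-empty, and ruling that configuration out is the entire content of the proposition. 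You explicitly leave it unresolved (``is the delicate point the argument must settle''), so the proof is incomplete at its only non-routine step.

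Moreover, the repair you sketch undermines your own bookkeeping. The inequality $S_i(w)\neq\emptyset$, on which the ``each $w$ blocks at most one assignment'' count rests, is exactly the statement that the colouring of $G-v$ is properly connecting. Once you perform a Kempe-type recolouring at $u_1$, or allow, as you suggest, that ``the internal colouring of $G-v$ need not be proper-connecting on its own,'' some $S_i(w)$ may become empty and a single vertex can then block several (or all) assignments; you would have to re-establish reachability of such $w$ through $v$ itself, which is circular since the colours at $v$ are what you are trying to choose. There is also no argument that a suitable recolouring exists at all, nor that it does not recreate the bad configuration elsewhere. The way this difficulty is actually handled in the literature around \cite{Borozan} is not to repair an arbitrary witnessing colouring a posteriori, but to carry a stronger invariant throughout (a properly connecting $2$-colouring with the ``strong property,'' i.e.\ one in which relevant pairs are joined by proper paths beginning with either colour, so that the sets corresponding to your $S_i(w)$ are automatically $\{1,2\}$); without some such strengthened hypothesis or a concrete recolouring lemma, your argument does not close.
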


\begin{thm}[\cite{Borozan}]\label{thm1}
If $G$ is a $2$-connected graph, then $pc(G)\leq 3$.
\end{thm}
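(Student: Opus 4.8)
The plan is to build the colouring of the $2$-connected graph $G$ along an \emph{ear decomposition} and to argue by induction, carrying a strengthened invariant that is robust enough to survive the addition of each ear. By Whitney's classical theorem, every $2$-connected graph can be written as $G = H_0 \cup P_1 \cup \cdots \cup P_t$, where $H_0$ is a cycle and each ear $P_i$ is a path whose two endpoints lie in $H_{i-1} := H_0 \cup P_1 \cup \cdots \cup P_{i-1}$ while its internal vertices are new. If $G$ happens to be traceable the bound is immediate from Proposition~\ref{Proposition 1.}(5), so one may view the ear argument as the device that also covers the genuinely non-traceable $2$-connected graphs. I would colour $H_0$ first and then extend the colouring across $P_1,\ldots,P_t$ one ear at a time, never using more than three colours.

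To make the induction go through I would maintain more than bare proper connectedness. Call a $3$-colouring of a graph $H$ \emph{good} if (i)~$H$ is properly connected and (ii)~for every vertex $w$ and every colour $a\in\{1,2,3\}$ there is, to each other vertex, a proper path leaving $w$ along an edge of colour different from $a$; informally, every vertex can always be both entered and left without a colour clash. Property~(ii) is precisely what lets us \emph{splice}: when a proper path travels along an ear and reaches an attachment vertex $u\in H_{i-1}$ with its last edge of colour $c$, goodness of $H_{i-1}$ supplies a proper continuation out of $u$ to the desired target whose first edge avoids colour $c$, so the two pieces join into a single proper path.

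For the base case I would colour the cycle $H_0$ by alternating colours $1,2$; when $|H_0|$ is odd this leaves one adjacent clash, which I break with the third colour (or route around it using the two arcs of the cycle), and then check directly that the resulting colouring is good. For the inductive step, assume $H_{i-1}$ carries a good colouring and let $P_i = u\,x_1 x_2\cdots x_r\,v$ be the next ear. I would colour the edges of $P_i$ so that $P_i$ itself is a proper path, choosing the colour of the end edge $ux_1$ to be compatible with a colour already usable at $u$ (and likewise $x_r v$ at $v$), and choosing the interior colours so as to re-establish property~(ii) at each new vertex $x_j$. One then verifies that every new vertex $x_j$ reaches all of $H_{i-1}$ --- walk along $P_i$ to $u$ or to $v$ and splice using goodness of $H_{i-1}$ --- and that the enlarged colouring of $H_i$ is again good.

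The main obstacle is the junction bookkeeping at the two ends of each ear simultaneously. I must place colours on the first and last ear edges that (a)~keep the ear a proper path, (b)~allow splicing of paths that arrive along the ear, and (c)~do not destroy property~(ii) at $u$ or at $v$ after the ear is attached. Satisfying (a)--(c) at \emph{both} endpoints at once is exactly where two colours can fail and a third becomes indispensable, so the heart of the argument is to formulate invariant~(ii) strongly enough that splicing always succeeds yet weakly enough that it can be re-established after every ear. I would also dispose of the small or degenerate cases separately: ears of length one (single chords), and the base triangle, for which the proper connection number is $1$ and goodness must be checked by hand.
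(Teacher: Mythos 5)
Your overall strategy --- open ear decomposition, induction, and a strengthened splicing invariant --- is in fact the strategy behind this theorem in the literature: the paper under review gives no proof at all (Theorem~\ref{thm1} is quoted from \cite{Borozan}), and the proof in \cite{Borozan} proceeds by exactly such an induction. The problem is that your invariant~(ii) is \emph{one-sided}, and the place where this matters is precisely the step you postpone as ``the main obstacle.'' Property~(ii) lets you forbid a colour on the \emph{first} edge of a connecting path only, while splicing across an ear demands simultaneous control of \emph{both} ends of a path inside $H_{i-1}$. Concretely, take the ear $P_i=u\,x_1x_2\,v$, write $c_0$ for the colour of $ux_1$ and $c_2$ for the colour of $x_2v$, and apply~(ii) at the new vertex $x_2$ with target $x_1$ and forbidden colour equal to the colour of $x_1x_2$: the path must leave through $x_2v$ and cannot revisit $x_2$, so it has the form $x_2,v,\dots,u,x_1$, and you need a proper $v$--$u$ path in $H_{i-1}$ whose first edge has colour $\neq c_2$ \emph{and} whose last edge has colour $\neq c_0$. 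Nothing in~(ii) provides this. For instance, if $H_{i-1}$ is an alternating $2$-coloured $6$-cycle and $u,v$ are antipodal on it, then~(ii) holds, but the only proper $v$--$u$ paths are the two arcs, realizing exactly the (first,\,last) colour pairs $(1,1)$ and $(2,2)$; choosing $c_0=2$ at $u$ and $c_2=1$ at $v$ --- each choice ``compatible with a colour already usable'' at its own endpoint, which is all your procedure checks --- leaves no valid splice, and~(ii) fails at $x_2$ in $H_i$. The same two-ended requirement reappears when an old vertex $w$ must reach a new internal vertex $x_j$ (first edge avoiding a prescribed colour at $w$, edge entering the ear avoiding $c_0$ or the colour of $x_rv$).

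This is exactly why \cite{Borozan} carries a \emph{two-sided} invariant, the ``strong property'': between every pair of vertices there exist two proper paths whose first edges receive distinct colours and whose last edges receive distinct colours. Re-establishing that property after each ear, while coordinating the colours of the ear's two end edges with the colour pairs actually realized in $H_{i-1}$, is the substantive content of the proof; it cannot be recovered from a one-ended condition together with independent local choices at $u$ and at $v$. So your skeleton is right, and you correctly sense that the strength of the invariant is the crux, but as formulated the induction breaks at the re-establishment step: the invariant must record admissible colour pairs at \emph{both} ends of paths, and the verification that this survives the addition of every ear --- the part your proposal explicitly leaves open --- is where the real work lies.
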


Huang et al. \cite{Wang} extended Theorem~\ref{thm1} as follows.

\begin{thm}[\cite{Wang}]\label{thm2}
If $G$ is a connected bridgeless graph, then $pc(G)\leq 3$.
\end{thm}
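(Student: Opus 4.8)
The plan is to reduce the statement to the $2$-connected case, Theorem \ref{thm1}, by exploiting the block structure of $G$. We may assume $|V(G)|\ge 3$. Since $G$ is connected and bridgeless, no block of $G$ is a single edge (a single-edge block is a bridge), so every block is $2$-connected and, by Theorem \ref{thm1}, has proper connection number at most $3$. The blocks of $G$ form a tree, the block-cut tree, whose leaves are exactly the end-blocks, and I would argue by induction on the number of blocks.

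To make the induction run I would strengthen the statement to a coloring adapted to a prescribed vertex. Call a $3$-coloring of a connected graph $H$ \emph{strong at} a vertex $x$ if $H$ is properly connected and, for every $u\neq x$, there exist two proper $u$--$x$ paths whose last edges at $x$ have distinct colors. The strengthened claim is that every connected bridgeless graph admits, for each of its vertices $x$, a $3$-coloring that is strong at $x$. The base case, a single $2$-connected block, requires showing that the coloring underlying Theorem \ref{thm1} can be taken strong at any prescribed vertex. I expect this to be the main obstacle: it amounts to revisiting the ear-decomposition coloring behind Theorem \ref{thm1} and verifying that two distinct arrival colors at $x$ survive the addition of each ear.

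Granting the strengthened base case, the inductive step proceeds as follows. Given $G$ with at least two blocks and a prescribed vertex $x$, choose an end-block $B$, with cut vertex $v$, such that $x\notin V(B)\setminus\{v\}$; since the block-cut tree has at least two end-blocks, such a choice is possible. Put $G'=G-(V(B)\setminus\{v\})$, which is connected, bridgeless, contains $x$, and has one fewer block. By induction $G'$ has a $3$-coloring strong at $x$, and by the strengthened base case $B$ has a $3$-coloring strong at $v$; as $B$ and $G'$ share only the vertex $v$ and no edge, these combine to a single $3$-coloring of $G$, and I am free to permute colors on each side. For $u_1\in V(B)\setminus\{v\}$ and $u_2\in V(G')\setminus\{v\}$ I concatenate a proper $u_1$--$v$ path in $B$ with a proper $v$--$u_2$ path in $G'$; two-color freedom at $v$ on the $B$-side lets me avoid the color of the first $G'$-edge, so the concatenation is proper. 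Pairs lying inside $B$ or inside $G'$ are handled by the two given colorings. Finally, strongness at $x$ is inherited directly for $u\in V(G')$, and for $u\in V(B)\setminus\{v\}$ it follows by prepending, to each of two proper $v$--$x$ paths of $G'$ ending in different colors at $x$, a proper $u$--$v$ path in $B$ whose last edge avoids the first $G'$-edge color. This re-establishes the strong coloring for $G$ and closes the induction.

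The delicate points are concentrated in the base case and in the color bookkeeping at cut vertices. With only three colors the clash-avoidance at $v$ is a finite check: a single forbidden color can never exclude both members of a two-element set of arrival colors, so a proper crossing of $v$ always exists; I would also make sure the argument is uniform when $v$ lies in three or more blocks, where the strong property must hold simultaneously for every piece glued at $v$. The conceptual content, however, is the strengthened form of Theorem \ref{thm1}, from which the bridgeless case follows by the block induction above.
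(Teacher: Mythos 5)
The paper does not prove this statement at all: Theorem~\ref{thm2} is quoted from Huang, Li and Wang \cite{Wang}, so there is no in-paper proof to compare against. Measured against the argument in that reference, your architecture is the right one and essentially the standard one: every block of a bridgeless graph on at least three vertices is $2$-connected, and one glues $3$-colourings of the blocks along the block-cut tree, using a two-choice ``arrival colour'' property at each cut vertex to make concatenated paths proper. Your bookkeeping at the cut vertex $v$ is sound --- with two available arrival colours and a single forbidden colour, a proper crossing of $v$ always exists --- and the inductive step (remove an end-block not containing the prescribed vertex $x$ internally, colour the two pieces, splice) goes through as you describe.

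The genuine gap is exactly where you suspect it: the base case. Theorem~\ref{thm1} as stated gives only $pc(B)\leq 3$ for a $2$-connected block $B$, with no control over the colours of the last edges of proper paths arriving at a prescribed vertex. Your induction needs strictly more, namely that $B$ admits a $3$-colouring that is strong at its cut vertex $v$ (two proper $u$--$v$ paths with distinct terminal colours, for every $u$). This strengthening is true --- it is the ``strong property'' that Borozan et al.\ \cite{Borozan} actually establish in their proof of Theorem~\ref{thm1}, via ear decompositions --- but it is not a formal consequence of the statement of Theorem~\ref{thm1}, and it is where essentially all of the work in the theorem lives. As written, your proof defers this step (``granting the strengthened base case''), so it is an outline of the correct reduction rather than a complete proof; to close it you must either carry out the ear-decomposition argument with the arrival-colour invariant, or cite the strong-property form of the $2$-connected theorem explicitly rather than Theorem~\ref{thm1} as given.
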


For a connected graph $G$ with bridges, Huang et al. \cite{Wang} described the following natural approach: Let $B \subseteq E(G)$ be the set of cut-edges of a graph $G$. Let $\mathcal{C}$ denoted the set of connected components of $G^{\prime}$ = $(V, E\backslash B)$. There are two types of elements in $\mathcal{C}$, singletons and connected bridgeless subgraphs of $G$. Let $\mathcal{S} \subseteq \mathcal{C}$ denoted the singletons and let $\mathcal{D}$ = $\mathcal{C} \setminus \mathcal{S}$. Each element of $\mathcal{S}$ is, therefore, a vertex, and each element of $\mathcal{D}$ is a connected bridgeless subgraph of $G$. Contracting each element of $\mathcal{D}$ to a vertex, we obtain a new graph $G^{*}$. It is easy to see that $G^{*}$ is a tree, and the edge set of $G^{*}$ is $B$. Using the above notations, we have the following result.


\begin{thm}[\cite{Wang}]\label{thm3}
If $G$ is a connected graph, then $pc(G) \leq \max \{3, \Delta(G^{*})\}$.
\end{thm}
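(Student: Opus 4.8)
The plan is to match the two terms in $\max\{3,\Delta(G^{*})\}$ to two ingredients: Theorem~\ref{thm2} to treat each bridgeless component with $3$ colours, and Proposition~\ref{Proposition 1.}(4) (which gives $pc=\Delta$ for a tree) to treat the bridge-tree $G^{*}$ with $\Delta(G^{*})$ colours. Write $q=\max\{3,\Delta(G^{*})\}$. I will produce one explicit $q$-colouring of $G$ and then, for every pair of vertices, exhibit a proper path obtained by concatenating short paths inside the components with the bridges between them.

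First I would colour the bridges. Since $G^{*}$ is a tree, it is properly edge-colourable (in the bipartite/tree sense) with $\Delta(G^{*})\le q$ colours, so I assign to each bridge a colour from $\{1,\dots,\Delta(G^{*})\}$ such that any two bridges incident with a common vertex of $G^{*}$ get distinct colours. In particular two bridges meeting at a singleton of $\mathcal S$, and two bridges attached to the same component $D\in\mathcal D$, receive different colours. Next, to each bridgeless component $D$ I would assign a proper-connection colouring using colours $\{1,2,3\}\subseteq\{1,\dots,q\}$, supplied by Theorem~\ref{thm2}. The total number of colours used is then exactly $\max\{3,\Delta(G^{*})\}$.

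With the colouring fixed, take any $x,y\in V(G)$ and follow the unique path between their parts in the tree $G^{*}$; lifting it to $G$ yields an alternation of bridges with sub-paths through the components and singletons it passes. This concatenation is a proper path provided: (i) each component sub-path is proper; (ii) at a singleton the two incident bridges differ, which holds by the edge-colouring of $G^{*}$; and (iii) at a component $D$ entered by a bridge of colour $s$ at vertex $a$ and left by a bridge of colour $t$ at vertex $b$, the chosen $a$--$b$ sub-path has first edge $\ne s$ and last edge $\ne t$ (with the one-sided analogue when $x$ or $y$ lies inside $D$). When $a=b$ no sub-path is needed and (iii) collapses to (ii); when a bridge colour exceeds $3$ the corresponding constraint in (iii) is automatically satisfied, since component edges only use colours $\{1,2,3\}$.

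The main obstacle is precisely requirement (iii): plain proper connection from Theorem~\ref{thm2} produces \emph{some} proper $a$--$b$ path but gives no control over the colours of its two end edges. I therefore need to strengthen Theorem~\ref{thm2} to a \emph{flexible-endpoint} form: every connected bridgeless graph admits a $3$-colouring such that for all vertices $a,b$ and all colours $s,t$ there is a proper $a$--$b$ path whose first edge avoids $s$ and whose last edge avoids $t$. I would prove this by revisiting the argument behind Theorem~\ref{thm2}, building the colouring from an ear decomposition of each $2$-connected block and carrying the flexible-endpoint property itself as the induction invariant (in particular keeping every vertex incident with at least two colours, so that one forbidden colour can always be dodged at each end), then gluing blocks across cut-vertices along the block-tree of $D$. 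Once this strengthening is secured, the three verifications above are routine and the theorem follows; the bound is best possible, since by Proposition~\ref{Proposition 1.}(4) the bridges alone already force $\Delta(G^{*})$ colours.
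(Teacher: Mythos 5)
First, a point of reference: the paper never proves Theorem~\ref{thm3}; it is quoted verbatim from \cite{Wang}. So your proposal can only be compared with the argument in that reference, which, like your outline, properly edge-colours the bridges (the edges of the tree $G^{*}$) and colours each element of $\mathcal{D}$ with three colours coming from the bridgeless case. Up to that point your plan is the standard one, and you correctly isolate the crux in your requirement (iii): one must control the colours of the first and last edges of the proper path used to cross a component between two attachment vertices.

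The genuine gap is that your resolution of (iii) — the ``flexible-endpoint'' strengthening of Theorem~\ref{thm2} — is false, so the central lemma of your proof cannot be established by any argument. Take $D=K_{3}$ with vertices $x,y,z$: it is connected and bridgeless, yet under \emph{every} edge-colouring the only $x$--$y$ paths are the edge $xy$ and the path $x,z,y$; choosing the forbidden colours $s=c(xy)$ and $t=c(zy)$ defeats both (the first path has first edge of colour $s$; the second has last edge of colour $t$, if it is proper at all). Hence no colouring of $K_{3}$, with three or any number of colours, satisfies ``for all $a,b$ and all $s,t$ there is a proper $a$--$b$ path whose first edge avoids $s$ and whose last edge avoids $t$,'' and no ear-decomposition induction can carry that invariant. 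What is actually achievable, and what \cite{Wang} (following \cite{Borozan}) works with, is the weaker \emph{strong property}: a $3$-colouring in which every pair $u,v$ is joined by \emph{two} proper paths whose first edges have distinct colours and whose last edges have distinct colours (a rainbow triangle has this). Note that this existential, two-choice property still does not let one dodge an arbitrary pair $(s,t)$: the two available (start colour, end colour) pairs may be ``crossed,'' one failing only at $s$ and the other only at $t$. That is exactly why the interfaces cannot be handled the way you propose — fixing the bridge colouring first, independently, and then asking each component to beat every conceivable $(s,t)$ — and why the cited proof instead exploits that the forbidden colours at a component are not arbitrary but are the colours of the bridges attached to that very component, so the bridge colouring and the component colourings (e.g., which three colours a component uses, or how they are permuted) can be chosen in coordination. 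Your proof as written collapses at its key lemma; repairing it requires replacing the universal avoidance property by the strong property plus this coordination step.
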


\begin{thm}[\cite{Susan}]\label{thm3.1}
Let $G$ be a connected graph of order $n$ and $k\geq 2$. If $|E(G)|\geq \binom{n-k-1}{2} +k+2$, then $pc(G) \leq k$ except when $k=2$ and $G\in \{G^{\ast}_{1}, G^{\ast}_{2}\}$.
\end{thm}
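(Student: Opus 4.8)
The plan is to split the argument according to whether $G$ has a bridge and to control $pc(G)$ through the bridge-tree $G^{*}$ together with Theorem \ref{thm3}, which gives $pc(G)\le\max\{3,\Delta(G^{*})\}$. The entire weight of the general case rests on one counting estimate: \emph{the size hypothesis forces $\Delta(G^{*})\le k$}. Granting this, the case $k\ge 3$ is immediate, since then $pc(G)\le\max\{3,\Delta(G^{*})\}\le\max\{3,k\}=k$; in particular, if $G$ is bridgeless then $G^{*}$ is a single vertex and $pc(G)\le 3\le k$ already by Theorem \ref{thm2}. Thus the real difficulty is concentrated in the key estimate and in the borderline case $k=2$.

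For the key estimate I would argue by contraposition. Suppose some vertex $u^{*}$ of $G^{*}$ has $\deg_{G^{*}}(u^{*})\ge k+1$, and let $U\subseteq V(G)$ be the block (a single vertex, or the vertex set of a bridgeless component) that $u^{*}$ represents. The $\ge k+1$ edges at $u^{*}$ are bridges of $G$; deleting them splits the branches of $G^{*}-u^{*}$ into components with vertex sets $W_{1},\dots,W_{d}$, $d\ge k+1$. Since $G^{*}$ is a tree, there is no edge of $G$ between distinct $W_{i},W_{j}$, and exactly one bridge joins $U$ to each $W_{i}$, so every remaining edge lies inside $U$ or inside a single $W_{i}$. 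Hence
\begin{equation*}
|E(G)|\ \le\ \binom{|U|}{2}+\sum_{i=1}^{d}\binom{|W_{i}|}{2}+d,
\end{equation*}
subject to $|U|+\sum_{i}|W_{i}|=n$ with all parts $\ge 1$ and $d\ge k+1$. Because $x\mapsto\binom{x}{2}$ is convex, the right-hand side is maximized at an extreme point of the constraint region, namely $d=k+1$ with one part equal to $n-k-1$ and the remaining $k+1$ parts equal to $1$; this yields $|E(G)|\le\binom{n-k-1}{2}+k+1$, strictly below the hypothesized $\binom{n-k-1}{2}+k+2$. The contradiction gives $\Delta(G^{*})\le k$.

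It remains to treat $k=2$, where the estimate only yields $pc(G)\le 3$ and one must upgrade this to $pc(G)\le 2$. Now $\Delta(G^{*})\le 2$, so $G^{*}$ is a path of blocks $B_{1}-B_{2}-\cdots-B_{t}$, and the hypothesis gives $\sum_{i}\binom{|B_{i}|}{2}\ge\sum_{i}e(B_{i})\ge\binom{n-3}{2}+4-(t-1)$; the same convexity then places all but a bounded number of vertices into one near-complete \emph{dominant} block, leaving only a few vertices in small blocks at the ends of the path. My plan is to build a Hamilton path of $G$ explicitly: traverse the dominant block, which is Hamilton-connected, between its at most two bridge-endpoints and splice on the small end-blocks (a single-vertex block becomes a path endpoint or a degree-$2$ pass-through). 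Once $G$ is traceable, Proposition \ref{Proposition 1.}(5) gives $pc(G)\le 2$. When a small end-block is too sparse to admit the required internal Hamilton path, I would instead choose a vertex $v$ with $2\le d(v)\le n-4$, apply Proposition \ref{Proposition3} together with an induction on $n$ (the bound is inherited because $|E(G)|-d(v)\ge\binom{n-4}{2}+4$ precisely when $d(v)\le n-4$); the complementary regime in which every vertex has degree $>n-4$ is so dense that traceability follows directly.

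The main obstacle is exactly this $k=2$ endgame: isolating the finitely many configurations of the small end-blocks for which neither a Hamilton path exists nor the vertex-deletion induction applies, and verifying by hand that these are precisely $G_{1}^{*}$ and $G_{2}^{*}$, each with $pc=3$. The counting step and the $k\ge 3$ conclusion are routine; the genuinely delicate part is the extremal case analysis that simultaneously produces the $pc\le 2$ colourings in the generic case and certifies that $G_{1}^{*}$ and $G_{2}^{*}$ are the only exceptions.
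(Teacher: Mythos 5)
This statement is quoted verbatim from \cite{Susan}; the paper you are reading gives no proof of it, so there is nothing internal to compare against --- I can only judge your argument on its merits and against the technique the paper uses for its analogous result, Theorem~\ref{thm7}. Your $k\ge 3$ case is correct and complete: the branches $W_{1},\dots,W_{d}$ hanging off a vertex $u^{*}$ of $G^{*}$ are indeed pairwise non-adjacent because $G^{*}$ is a tree, convexity of $x\mapsto\binom{x}{2}$ does give $|E(G)|\le\binom{n-k-1}{2}+k+1$ whenever $\deg_{G^{*}}(u^{*})\ge k+1$, and combining $\Delta(G^{*})\le k$ with Theorems~\ref{thm2} and~\ref{thm3} finishes. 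This is the same mechanism the paper itself uses in Theorem~\ref{thm7} (bridgeless case via Theorem~\ref{thm2}, few bridges via Theorem~\ref{thm3}, many bridges excluded by an edge count); the only difference is that you count locally at one high-degree vertex of $G^{*}$ while the paper counts globally over all $t$ bridges via Theorem~\ref{thm2.0} and Lemma~\ref{lem2.6}. Both counts are valid.

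The genuine gap is the entire case $k=2$, which is where the actual content of the theorem lives --- it is the only case in which the exceptional graphs $G^{\ast}_{1},G^{\ast}_{2}$ appear --- and which you leave as a plan rather than a proof. Two concrete failure points. First, ``the dominant block is Hamilton-connected'' does not follow from an edge count: take the block $B$ consisting of $K_{m-1}$ plus one extra vertex joined to exactly two vertices $a,b$ of it, and attach the two bridges with pendant vertices at $a$ and $b$. The resulting graph has $n=m+2$ vertices and $\binom{m-1}{2}+4=\binom{n-3}{2}+4$ edges, so it satisfies the size hypothesis with equality, yet $B$ has no Hamilton path from $a$ to $b$ (the extra vertex would have to be interior with both path-neighbours in $\{a,b\}$), hence $G$ is not traceable and your primary route fails; the theorem still asserts $pc(G)\le 2$, so the conclusion must come from elsewhere. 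Second, your fallback --- deleting a vertex $v$ with $2\le d(v)\le n-4$ and invoking Proposition~\ref{Proposition3} with induction --- requires $pc(G-v)\le 2$, and so it breaks precisely when $G-v$ is one of the exceptional graphs, or is disconnected, or no admissible $v$ exists; tracking how the exceptions propagate through the induction is exactly the long case analysis (an induction on $n$ organized by the detour order $p(G)$, the scheme this paper imitates in its own Theorem~\ref{thm8}) that constitutes the published proof. Declaring that analysis ``the main obstacle'' and stopping there means the theorem as stated, in particular the claim that $G^{\ast}_{1}$ and $G^{\ast}_{2}$ are the \emph{only} exceptions, is not proved.
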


The two attentional graphs in Theorem~\ref{thm3.1}. and the following.

Let $G^{\ast}_{1}=K_{1}\vee (2K_{1}+K_{2})$ and $G^{\ast}_{2}=K_{1}\vee (K_{1}+2K_{2})$ where $G+H=(V_{G}\cup V_{H},E_{G}\cup E_{H})$ is the disjoint union and $G\vee H=(V_{G}\cup V_{H},E_{G}\cup E_{H}\cup \{uv : u\in V_{G}, v\in V_{H}\})$ is the join of the graphs $G=(V_{G},E_{G})$ and $H=(V_{H},E_{H})$.


We shall repeatedly use the following identities.

\begin{Proposition}\label{Proposition2.5}
For every pair of positive integers $a$ and $b$,

$\binom{a}{2}+\binom{b}{2}$=$\binom{a+b}{2}-ab$ and
$\binom{a+1}{2}$=$\binom{a}{2}+a$.
\end{Proposition}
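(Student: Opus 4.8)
The plan is to verify both identities directly, since each is an elementary statement about binomial coefficients of the form $\binom{x}{2}=\tfrac{x(x-1)}{2}$. For the first identity $\binom{a}{2}+\binom{b}{2}=\binom{a+b}{2}-ab$, I would reduce everything to a common polynomial expression: the left-hand side expands as $\tfrac{a(a-1)}{2}+\tfrac{b(b-1)}{2}=\tfrac{a^{2}+b^{2}-a-b}{2}$, while the right-hand side expands as $\tfrac{(a+b)(a+b-1)}{2}-ab=\tfrac{a^{2}+2ab+b^{2}-a-b}{2}-ab=\tfrac{a^{2}+b^{2}-a-b}{2}$, and the two agree. This one-line computation already settles the claim, but I would prefer to record the combinatorial reading because it is cleaner and explains why the identity holds.

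The combinatorial proof runs as follows. Interpret $\binom{a+b}{2}$ as the number of unordered pairs drawn from a set partitioned into a block $A$ of size $a$ and a block $B$ of size $b$. Every pair falls into exactly one of three classes, namely both endpoints in $A$, both in $B$, or one endpoint in each, and these classes are counted by $\binom{a}{2}$, $\binom{b}{2}$, and $ab$ respectively; hence $\binom{a+b}{2}=\binom{a}{2}+\binom{b}{2}+ab$, which is the asserted identity after transposing $ab$. For the second identity $\binom{a+1}{2}=\binom{a}{2}+a$ I would argue identically by the Pascal-type split: among the $2$-subsets of $\{1,\dots,a+1\}$, exactly $\binom{a}{2}$ avoid the element $a+1$ and exactly $a$ contain it. The algebraic check is equally immediate, since $\binom{a+1}{2}=\tfrac{(a+1)a}{2}=\tfrac{a(a-1)}{2}+a=\binom{a}{2}+a$.

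There is no genuine obstacle in this proposition; the only point requiring any attention is the boundary behaviour at the smallest admissible values, such as $a=1$ or $b=1$, where some binomial terms vanish. The counting interpretation makes these vanishing terms transparent (a block of size one contributes no internal pair), so that the identities remain valid for all positive integers $a$ and $b$ under the convention $\binom{1}{2}=0$. The reason these facts are isolated as a separate proposition is purely organisational: they are the two algebraic moves used repeatedly to rewrite the threshold $\binom{n-m-(k+1-m)(\delta+1)}{2}+(k+1-m)\binom{\delta+1}{2}+k+2$ when comparing edge counts in the main argument, and stating them once avoids re-deriving the same manipulation at each occurrence.
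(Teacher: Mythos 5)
Your proof is correct; both the direct algebraic expansion and the combinatorial split are valid, and the paper itself states these identities without proof, treating them as immediate. Your verification matches the obvious intended justification, so there is nothing further to reconcile.
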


In addition, we need the following result.

\begin{thm}[\cite{Dirac}]\label{thm Dirac}
Let $G$ be a graph with $n$ vertices. If $\delta(G) \geq \frac{n-1}{2}$ then $G$ has a Hamiltonian path. Moreover, if $\delta(G)\geq\frac{n}{2}$, then $G$ has a Hamiltonian cycle. Also, if $\delta(G) \geq \frac{n+1}{2}$, then $G$ is Hamiltonian-connected.
\end{thm}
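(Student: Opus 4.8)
The plan is to prove the three statements in order of increasing hypothesis, using the Hamiltonian cycle case as the engine and deriving the other two from it. The thresholds $\tfrac{n-1}{2}$, $\tfrac{n}{2}$, $\tfrac{n+1}{2}$ correspond respectively to the existence of a Hamiltonian path, a Hamiltonian cycle, and Hamiltonian-connectedness, and the proofs get progressively harder.

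First I would settle the cycle case $\delta(G)\ge \tfrac{n}{2}$ by the classical longest-path rotation argument. Note $G$ is connected, since a vertex in a smallest component of a disconnected graph would have degree at most $\lfloor n/2\rfloor-1<\tfrac{n}{2}$. Take a longest path $P=v_0v_1\cdots v_\ell$; by maximality every neighbour of $v_0$ and of $v_\ell$ lies on $P$. Put $S=\{\,i:v_0v_i\in E(G)\,\}$ and $T=\{\,i:v_{i-1}v_\ell\in E(G)\,\}$, both subsets of $\{1,\dots,\ell\}$. Since $|S|=d(v_0)\ge \tfrac{n}{2}$, $|T|=d(v_\ell)\ge \tfrac{n}{2}$ and $\ell\le n-1$, we get $|S|+|T|\ge n>\ell$, so $S\cap T\neq\varnothing$. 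An index $i\in S\cap T$ gives edges $v_0v_i$ and $v_{i-1}v_\ell$, and splicing produces the cycle $v_0v_1\cdots v_{i-1}v_\ell v_{\ell-1}\cdots v_iv_0$ on all $\ell+1$ vertices of $P$. If this cycle missed a vertex, connectivity would supply an edge to an outside vertex and yield a path longer than $P$, a contradiction; hence the cycle is Hamiltonian.

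For the path case $\delta(G)\ge\tfrac{n-1}{2}$ I would reduce to the cycle case by adjoining a universal vertex $w$ adjacent to all of $V(G)$, forming $G^{+}$ on $n+1$ vertices. Each original vertex then has degree at least $\tfrac{n-1}{2}+1=\tfrac{n+1}{2}=\tfrac{|V(G^{+})|}{2}$, while $w$ has degree $n\ge\tfrac{n+1}{2}$, so $\delta(G^{+})\ge\tfrac{|V(G^{+})|}{2}$. The cycle case gives a Hamiltonian cycle of $G^{+}$, and deleting $w$ from it leaves a Hamiltonian path of $G$. For the Hamiltonian-connected case $\delta(G)\ge\tfrac{n+1}{2}$ the naive reductions break down: a dummy vertex adjacent only to the two prescribed endpoints has degree $2$ and destroys the minimum-degree hypothesis. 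So I would argue directly. Fix distinct $s,t$, take a longest $s$--$t$ path $P=x_0\cdots x_p$ with $x_0=s$, $x_p=t$, and observe that it suffices to show $V(P)=V(G)$, since a spanning $s$--$t$ path \emph{is} the desired Hamiltonian path. Suppose some $z\notin V(P)$; then no two neighbours of $z$ on $P$ are consecutive, else $z$ could be inserted to lengthen $P$. The substantive step is a crossing argument using the degree-sum bound $d(x)+d(y)\ge n+1$, which $\delta(G)\ge\tfrac{n+1}{2}$ supplies for every pair: one combines the neighbourhood of the omitted vertex $z$ with those of the pinned endpoints to reroute $P$ into a strictly longer $s$--$t$ path, contradicting maximality.

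I expect the Hamiltonian-connected case to be the main obstacle, precisely because the endpoints $s,t$ are fixed, so the endpoint-rotation move that drives the cycle case is unavailable and one must instead exploit the full degree-sum condition through a more delicate interchange. By contrast, once the cycle case is established, the path case follows immediately from the universal-vertex reduction, so the real work is concentrated in the third part.
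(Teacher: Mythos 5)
The paper itself gives no proof of this statement: it is quoted as a classical auxiliary result (Dirac's theorem and its standard strengthenings) with a citation to \cite{Dirac}, so the only thing to assess is whether your proposal is a complete proof. Your first two parts are: the rotation/crossing argument for $\delta(G)\geq \frac{n}{2}$ is the standard proof of Dirac's theorem and is carried out correctly, and the universal-vertex reduction for $\delta(G)\geq\frac{n-1}{2}$ is also correct (modulo the trivial remark that a Hamiltonian cycle requires $n\geq 3$, so $n\leq 2$ should be checked by hand).

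The third part, however, has a genuine gap, and you have located it yourself without closing it. For a longest $s$--$t$ path with \emph{pinned} endpoints, maximality does not force the neighbours of $s$ and $t$ to lie on the path (prepending a neighbour of $s$ produces a path with the wrong endpoint, not a longer $s$--$t$ path), so the crossing count $|S|+|T|>\ell$ that drives your Part 1 is simply unavailable; the sentence ``one combines the neighbourhood of the omitted vertex $z$ with those of the pinned endpoints to reroute $P$'' is an assertion of the needed lemma, not a proof of it, and it is precisely the nontrivial content of Hamiltonian-connectedness. The standard way to fill this hole is the vertex-addition plus closure argument: given $s,t$, add a new vertex $w$ adjacent only to $s$ and $t$, obtaining $G'$ on $n+1$ vertices, so that Hamiltonian cycles of $G'$ correspond to Hamiltonian $s$--$t$ paths of $G$. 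Every non-adjacent pair $u,v\in V(G)$ satisfies $d_{G'}(u)+d_{G'}(v)\geq n+1=|V(G')|$, and the Bondy--Chv\'{a}tal exchange lemma (if $u,v$ are non-adjacent with degree sum at least $|V(H)|$, then $H$ is Hamiltonian if and only if $H+uv$ is) lets one add all edges inside $V(G)$; the resulting graph, a complete graph plus $w$ joined to $s$ and $t$, is obviously Hamiltonian, hence so is $G'$. Note that the proof of that exchange lemma is exactly your crossing argument with pinned endpoints, but it succeeds there only because the $u$--$v$ path in question is \emph{spanning}, which makes $|S|+|T|\geq |V(H)|>|V(H)|-1$ force a crossing; this is why your plan of applying the crossing idea to a non-spanning longest $s$--$t$ path with an omitted vertex $z$ does not go through as stated.
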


\begin{thm}[\cite{Susan}]\label{thm2.0}
Let $G$ be a connected graph of order $n$ and $t$ bridges, then  $|E(G)|\leq \binom{n-t}{2} +t$.
\end{thm}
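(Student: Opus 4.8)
The plan is to exploit the bridge decomposition of $G$ introduced just before Theorem~\ref{thm3}. First I would delete the set $B$ of the $t$ bridges and examine the components of $G' = (V, E \setminus B)$; writing these components as $C_1, \ldots, C_r$ with $|V(C_i)| = n_i$, we have $\sum_{i=1}^r n_i = n$. Since contracting each component to a single vertex yields the tree $G^{*}$ whose edges are precisely the $t$ bridges, and a tree on $r$ vertices has $r-1$ edges, I obtain $r = t+1$. No edge of $G$ joins two distinct components except through a bridge, so the edge set of $G$ splits cleanly as
\[
|E(G)| = t + \sum_{i=1}^{r} |E(C_i)|.
\]

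Next I would bound each component trivially by $|E(C_i)| \leq \binom{n_i}{2}$, which reduces the problem to the purely arithmetic task of maximizing $\sum_{i=1}^{t+1} \binom{n_i}{2}$ subject to $n_i \geq 1$ and $\sum_{i=1}^{t+1} n_i = n$. This maximization is the main (and essentially the only) technical point. Because $\binom{x}{2}$ is convex, the sum is maximized at an extreme configuration, and I would make this precise with a shifting argument: if two parts have sizes $a \geq b \geq 1$, then replacing them by $a+1$ and $b-1$ changes the sum by $\binom{a+1}{2} + \binom{b-1}{2} - \binom{a}{2} - \binom{b}{2} = a - b + 1 > 0$ (using the identity $\binom{a+1}{2} = \binom{a}{2} + a$ from Proposition~\ref{Proposition2.5}). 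Hence moving a vertex from any part into a part of at least its own size strictly increases the sum, so the maximum forces all but one of the parts down to singletons.

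Therefore the extremal configuration consists of $t$ singletons together with one component of order $n-t$, giving $\sum_{i=1}^{t+1}\binom{n_i}{2} \leq \binom{n-t}{2}$ and hence $|E(G)| \leq \binom{n-t}{2} + t$, which is the desired bound. I do not anticipate any serious obstacle: the only points requiring care are confirming $r = t+1$ from the tree structure of $G^{*}$ and stating the convexity/shifting step cleanly. The bound is tight, since equality is attained by the graph obtained from $K_{n-t}$ by attaching $t$ pendant vertices, whose $t$ pendant edges are exactly its bridges.
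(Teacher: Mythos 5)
Your proof is correct, and it matches the paper's approach: although Theorem~\ref{thm2.0} is quoted from \cite{Susan} without proof, the paper's own proof of Lemma~\ref{lem2.6} uses exactly your argument --- decompose along the $t$ bridges into $t+1$ components, bound each by a clique, and shift vertices between components (via $\binom{a+1}{2}+\binom{b-1}{2}-\binom{a}{2}-\binom{b}{2}=a-b+1>0$) until all but one component is a singleton, which in the $\delta=1$ case yields precisely $|E(G)|\leq\binom{n-t}{2}+t$. The only cosmetic point is that the shift should be applied to two parts of size at least $2$ so that no part drops to size $0$, but this does not affect the conclusion.
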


The next lemma will be useful for the proof of our main result.

\begin{lem}\label{lem2.6}
Let $G$ be a connected graph of order $n$  with $t$ bridges and $\delta=\delta(G)$.
Then $|E(G)|\leq \binom{n-m-(t-m)(\delta+1)}{2} +(t-m)\binom{\delta+1}{2}+t,$
where if $t=0$, then $m=0$. If  $\delta=1$, then $m=t$. If $\delta \geq 2$ and $t\neq 0$, then $m=\lfloor \frac{t-1}{\delta-1} \rfloor$.
\end{lem}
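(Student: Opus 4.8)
The plan is to analyse the bridge structure of $G$ directly and to reduce the statement to a constrained optimisation over the orders of the bridgeless blocks. Using the decomposition introduced just before Theorem~\ref{thm3}, I remove the set $B$ of all $t$ bridges; the components $D_1,\dots,D_{t+1}$ of $G-B$ are either singletons or $2$-edge-connected blocks, and contracting each block yields a tree $G^{*}$ with $t$ edges and $t+1$ nodes. Writing $n_i=|V(D_i)|$ we have $\sum_i n_i=n$, and since each $D_i$ contributes at most $\binom{n_i}{2}$ edges while the $t$ bridges contribute $t$ more, $|E(G)|\le \sum_{i}\binom{n_i}{2}+t$. When $\delta=1$ the claimed inequality is exactly Theorem~\ref{thm2.0}, and the case $t=0$ is immediate, so from here I assume $\delta\ge 2$ and $t\ge1$ and concentrate on bounding $\sum_i\binom{n_i}{2}$.

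First I record the local degree constraints forced by $\delta$. Let $d_i$ be the number of bridges incident with $D_i$, i.e.\ its degree as a node of $G^{*}$. Every vertex of $D_i$ carrying no bridge must already have degree $\ge\delta$ inside $D_i$, so a block that is a leaf of $G^{*}$ (that is, $d_i=1$) satisfies $n_i\ge\delta+1$, while a singleton block must carry $d_i\ge\delta$ bridges; more generally a block with $n_i\le\delta$ forces all of its vertices to receive bridges, so that $d_i$ is large. Since $G^{*}$ is a tree on $t+1$ nodes, $\sum_i(d_i-1)=t-1$ with every summand nonnegative; feeding in $d_i-1\ge\delta-1$ for each of the $m'$ singleton blocks gives $m'(\delta-1)\le t-1$, hence $m'\le\lfloor\frac{t-1}{\delta-1}\rfloor=m$. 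This is the numerical heart behind the choice of $m$.

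It then remains to show that $\sum_i\binom{n_i}{2}$ is maximised by the configuration consisting of one block of order $s=n-m-(t-m)(\delta+1)$ together with $t-m$ leaf blocks equal to $K_{\delta+1}$ and $m$ singletons; evaluating $\sum_i\binom{n_i}{2}$ there and simplifying with the identities of Proposition~\ref{Proposition2.5} yields exactly $\binom{s}{2}+(t-m)\binom{\delta+1}{2}$, and adding the $t$ bridges gives the bound. The argument is a convexity/compression one: since $\binom{x}{2}$ is convex, moving a vertex from a smaller non-maximal block to the largest block strictly increases $\sum_i\binom{n_i}{2}$, so I would push every block other than the largest down to its minimum admissible order, which by the local constraints is $\delta+1$ for a leaf and $1$ for a singleton. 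The trade-off between these two kinds of minimal block is precisely what the bridge budget $\sum_i(d_i-1)=t-1$ governs, and optimising it reproduces the split into $m$ singletons and $t-m$ leaf copies of $K_{\delta+1}$.

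The main obstacle is making this last compression step rigorous while simultaneously respecting the tree structure and the minimum-degree constraints: lowering a block's order changes the number of bridges it must absorb, so each exchange has to be accompanied by a legal rerouting of bridges in $G^{*}$, and one must rule out that \emph{intermediate} blocks (of order strictly between $1$ and $\delta+1$, cheap in vertices but expensive in bridges) beat the singleton/$K_{\delta+1}$ pair. I expect to handle this by comparing, for each admissible block order, the vertices saved against the bridge budget consumed, and verifying that a singleton is the most budget-efficient vertex-saver while a leaf $K_{\delta+1}$ is the cheapest way to terminate a branch of $G^{*}$; this reduces the whole extremal question to the single inequality $m'(\delta-1)\le t-1$ already established.
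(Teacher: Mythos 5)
Your proposal follows essentially the same route as the paper's proof: bound $|E(G)|$ by $\sum_i\binom{n_i}{2}+t$ over the bridgeless blocks, compress non-maximal blocks using convexity of $\binom{x}{2}$, bound the number of singletons by $\lfloor\frac{t-1}{\delta-1}\rfloor$ via the minimum-degree condition, and rule out blocks of order strictly between $1$ and $\delta+1$ as dominated by singletons. Your derivation of $m'(\delta-1)\le t-1$ from $\sum_i(d_i-1)=t-1$ in the tree $G^{*}$ is a slightly cleaner packaging of the paper's count of bridges at singletons, and the compression obstacle you flag at the end is exactly the one the paper resolves by noting that a block with $1<n_a<\delta+1$ must absorb at least $n_a(\delta+1-n_a)\ge\delta$ bridges, so converting it to a singleton both increases $\sum_i\binom{n_i}{2}$ and respects the singleton budget.
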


\begin{proof}

It is easy to see that the result holds for $t=0$.
We assume $t\geq 1$, then $|\mathcal{C}|=t+1$. Let $C_{1}, \cdots, C_{t+1}$ be the $|\mathcal{C}|$ elements  and $n_{i}$ ($i=1, \cdots, t+1$) be the orders of $C_{i}$.  Then $|E(G)|=\sum\limits_{i=1}^{t+1}|E(C_{i})|+t\leq\sum\limits_{i=1}^{t+1}\binom{n_{i}}{2}+t$. We define a supergraph $\widetilde{G}$ of $G$ (that is, $G$ is subgraph of $\widetilde{G}$) by adding all the possible edges in each $C_i$ in $G-B$  (that is, each component of $\widetilde{G}-B$ is complete. If no confusion arises,  we also use $C_i$ in $\widetilde{G}$ to denote the complete subgraphs obtained from $C_i$ of $\mathcal{C}$).

Now we construct the graph $G^{\prime}$ from $\widetilde{G}$ if there are two components $C_k$ and $C_l$ satisfying  $1<n_{k} \leq n_{l}$. We move a vertex $v$ from $C_{k}$ to $C_{l}$, replace  $v$ with an arbitrary vertex in $V(C_k)\setminus v$ for the bridges incident to $v$,  add the edges between $v$ and the vertices in $C_{l}$, and delete the edges between $v$ and the vertices in $C_{k}$, where $v$ is not adjacent to the vertices of $C_{l}$ in  $\widetilde{G}$.
We claim that $|E(\widetilde{G})|\geq|E(G)|$, since
\begin{equation}\label{eq1}
\begin{split}
|E(G^{\prime})|&=
\sum_{\substack{i=1\\ \neq k,l }}^{t+1}\binom{n_{i}}{2}+\binom{n_{k}-1}{2}+\binom{n_{l}+1}{2}+t\\
&=\sum_{\substack{i=1\\ \neq k,l }}^{t+1}\binom{n_{i}}{2}+\binom{n_{k}}{2}-(n_{k}-1)+\binom{n_{l}}{2}+n_{l}+t\\
&=|E(\widetilde{G})|+n_{l}-n_{k}+1>|E(\widetilde{G})|\geq|E(G)|.
 \end{split}
 \end{equation}

It can be seen that if $\delta=1$, then we repeatedly  move vertices so that $\mathcal{D}$ has only one element, that is, $|E(G)|\leq \binom{n-t}{2}+t$ by (\ref{eq1}).

We next assume $\delta_{\widetilde{G}}\geq 2$. Without loss of generality, we let $C_{i},1\leq i\leq m$ be the elements of $\mathcal{S}$ and $C_{i}, m< i\leq t+1$ be the elements of $\mathcal{D}$, respectively. Note that, each $C_i, i\leq m$ has at least $\delta$ neighbors in $\mathcal{C}$, and there are at most $m-1$ edges between  $C_i$ in $\mathcal{S}$. Thus, we have $m\delta-(m-1)\leq t$, that is, $m\leq \frac{t-1}{\delta-1}$.

Suppose there exist $n_{a}$ in $\widetilde{G}$ so that $1<n_{a}<\delta+1$, $m< a\leq t+1$. Then every vertex is at least incident to $\delta+1-n_{a}$ bridges in $V(C_{a})$ as the minimum degree of $\widetilde{G}$ is at least $\delta$, that is, there is at least  $\delta$ bridges incident to $V(C_{a})$ due to $n_{a}(\delta+1-n_{a})>\delta$ for $1<n_{a}<\delta+1$. By (\ref{eq1}), the size of $\widetilde{G}$ is fewer than the size  of the graph obtained by  moving vertices of $C_{a}$ to $C_{b}$ ($n_{b}\geq \delta+1$) so that $n_{a}=1$. Thus, by (\ref{eq1}), if the size of $G$ is as large as possible, then $m=\lfloor \frac{t-1}{\delta-1} \rfloor$ and $n_{i}\geq \delta+1,m< i\leq t+1$.

Therefore, we conclude that $|E(G)|\leq \binom{n-m-(t-m)(\delta+1)}{2} +(t-m)\binom{\delta+1}{2}+t$.
\end{proof}

We end this section with some results on the existence of long cycles in graphs that will be used below. We first state a classic result of Erd\H{o}s and Gallai~\cite{Erdos}.

\begin{thm}[\cite{Erdos}]\label{thm4}
Let $G$ be a graph of order $n$ and circumference $c(G)$. If
$$|E(G)|> \frac{c}{2} (n-1),$$
then $c(G)> c$.
\end{thm}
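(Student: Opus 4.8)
The plan is to prove the contrapositive: every graph $G$ of order $n$ with circumference $c(G)\le c$ satisfies $|E(G)|\le \frac{c}{2}(n-1)$. In fact it is cleaner to induct on $n$ while proving the slightly stronger statement $|E(G)|\le \frac{c}{2}(n-\omega)$, where $\omega$ denotes the number of connected components of $G$. This stronger form is precisely what makes the induction close, since the component/block bookkeeping is exactly where the factor $\frac{1}{2}$ is recovered.

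First I would dispose of the reducible cases. If $G$ is disconnected with components $G_1,\dots,G_\omega$, then applying the induction hypothesis to each (each has fewer than $n$ vertices and circumference at most $c$) and summing gives $|E(G)|=\sum_i|E(G_i)|\le \frac{c}{2}\sum_i(n_i-1)=\frac{c}{2}(n-\omega)$. If $G$ is connected but has a cut vertex, I would pass to its block decomposition $B_1,\dots,B_s$; the edges partition among the blocks and the block--cut-tree identity gives $\sum_i(|V(B_i)|-1)=n-1$, so applying the induction hypothesis to each (connected) block yields $|E(G)|=\sum_i|E(B_i)|\le \frac{c}{2}\sum_i(|V(B_i)|-1)=\frac{c}{2}(n-1)$. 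These reductions leave only the case where $G$ is $2$-connected.

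For the $2$-connected case I would split on the size. If $n\le c$, then trivially $|E(G)|\le\binom{n}{2}=\frac{n(n-1)}{2}\le \frac{c(n-1)}{2}$. If $n>c$, the key input is Dirac's classical theorem that a $2$-connected graph contains a cycle of length at least $\min\{n,2\delta\}$, where $\delta=\delta(G)$. Since $c(G)\le c<n$, this forces $2\delta\le c$, so $G$ has a vertex $v$ with $\deg(v)=\delta\le \frac{c}{2}$. Because $G$ is $2$-connected, $G-v$ is connected, has $n-1$ vertices, and still has circumference at most $c$, so the induction hypothesis gives $|E(G-v)|\le \frac{c}{2}(n-2)$. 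Adding back the edges at $v$, we get $|E(G)|\le \frac{c}{2}(n-2)+\frac{c}{2}=\frac{c}{2}(n-1)$, completing the induction. The extremal configurations, namely chains of copies of $K_c$ glued at cut vertices, show the bound is sharp and explain why the stronger component/block form is needed.

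I expect the main obstacle to be the $2$-connected step, and specifically the cycle-length lemma it rests on: the bound $c(G)\ge\min\{n,2\delta\}$ is exactly where the factor of $2$ is recovered, i.e. the gap between the naive degeneracy bound $\delta\le c-1$ (which one reads off the endpoints of a longest path) and the required $\delta\le c/2$. Proving that lemma is the genuine graph-theoretic work, via the standard rotation/extension argument: take a longest cycle $C$; if $G$ is not Hamiltonian, use $2$-connectivity to find two disjoint paths from a vertex off $C$ into $C$ and splice them in to produce a longer cycle, contradicting maximality. Once that lemma is available, the remainder is routine induction and elementary arithmetic.
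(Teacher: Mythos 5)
The paper does not prove this statement; it is quoted as a classical theorem of Erd\H{o}s and Gallai~\cite{Erdos}, so there is no in-paper argument to compare yours against. Your proof is correct and is the standard ``textbook'' route to this bound, which differs from the original 1959 argument (an induction driven by longest paths and a direct edge count). Your reductions are sound: summing over components gives the strengthened bound $\frac{c}{2}(n-\omega)$; the block decomposition with $\sum_i\bigl(|V(B_i)|-1\bigr)=n-1$ handles cut vertices; the case $n\le c$ is trivial since $\binom{n}{2}=\frac{n(n-1)}{2}\le\frac{c}{2}(n-1)$; and in the remaining $2$-connected case with $n>c$, Dirac's theorem $c(G)\ge\min\{n,2\delta\}$ forces $2\delta\le c$, so deleting a minimum-degree vertex (which leaves a connected graph on $n-1$ vertices) closes the induction via $|E(G)|\le\frac{c}{2}(n-2)+\frac{c}{2}$. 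The one nontrivial ingredient is exactly the one you flag: the lemma $c(G)\ge\min\{n,2\delta\}$ for $2$-connected graphs. Your one-line sketch of its proof (splicing two disjoint paths from an outside vertex into a longest cycle) does not by itself yield the factor $2\delta$ --- the genuine proof needs the crossing-chords/fan argument on a longest path or cycle --- but since this is Dirac's classical 1952 theorem (from the very paper the authors cite as \cite{Dirac} for Hamiltonicity conditions), invoking it as a known result is legitimate, and with that citation your argument is complete.
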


We shall use Woodall's extension of the Erd\H{o}s-Gallai Theorem, which may be stated as follows.

\begin{thm}[\cite{Woodall}]\label{thm5}
Let $G$ be a graph of order $n=tm+r$, where $m\geq 1$, $t\geq 0$, and $1\leq r\leq m $. If
$$|E(G)|> t\binom{m+1}{2} +\binom{r}{2},$$
then $c(G)\geq m+2$ and $p(G)\geq m+3$.
\end{thm}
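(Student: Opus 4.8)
The plan is to establish the contrapositive circumference statement first and then the detour statement. Write $f(n)=t\binom{m+1}{2}+\binom{r}{2}$ for $n=tm+r$ with $1\le r\le m$, so that the asserted implication is equivalent to the statement that every graph $G$ with circumference $c(G)\le m+1$ satisfies $|E(G)|\le f(n)$. The value $f(n)$ is attained by the connected graph whose blocks are $t$ copies of $K_{m+1}$ together with one copy of $K_r$, glued into a tree of cut-vertices (so that every cycle lies inside a single block and $c=m+1$); identifying this extremal family is what dictates the shape of the induction below.

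First I would reduce to the $2$-connected case by induction on $n$. If $G$ is disconnected or has a cut-vertex, I split $G$ into two subgraphs $G_{1},G_{2}$ meeting in at most one vertex, of orders $n_{1},n_{2}$; each satisfies $c(G_{i})\le m+1$, so the inductive hypothesis gives $|E(G_{i})|\le f(n_{i})$. The reduction then rests on the inequality $f(n_{1})+f(n_{2})\le f(n_{1}+n_{2}-1)$ in the cut-vertex case (and $f(n_{1})+f(n_{2})\le f(n_{1}+n_{2})$ in the disconnected case), which I would verify by a direct computation with the binomial identities of Proposition~\ref{Proposition2.5}; equality in the extremal clique-tree confirms that these are exactly the right inequalities. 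This leaves the single hard case: $G$ is $2$-connected with $c(G)\le m+1$.

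For the $2$-connected core I would take a longest cycle $C$ of length $\ell=c(G)\le m+1$ and run a fan/crossing argument. Since $G$ is $2$-connected, every vertex off $C$ reaches $C$ by two internally disjoint paths (Menger), and if two neighbours of such a vertex were spaced too far apart along $C$ one could reroute to create a cycle longer than $\ell$; quantifying this spacing constraint bounds the edges incident to $V(G)\setminus V(C)$, which together with the at most $\binom{\ell}{2}\le\binom{m+1}{2}$ edges inside $C$ should yield $|E(G)|\le f(n)$. I expect this counting to be the main obstacle: the graph $K_{2,n-2}$ shows that a $2$-connected graph of bounded circumference can have arbitrarily many vertices, so the clique bound $\binom{m+1}{2}$ is nowhere near enough and one genuinely needs the full value $f(n)$. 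A clean way to package the estimate is a Kopylov-type vertex-weighting, starting from the coarse Erd\H{o}s--Gallai bound of Theorem~\ref{thm4} and sharpening it to the exact extremal value.

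Finally, for the detour bound $p(G)\ge m+3$ I would carry the same strategy through a longest path $P=v_{0}v_{1}\cdots v_{p}$ using P\'osa rotations: assuming $p(G)\le m+2$, all neighbours of the rotation endpoints lie on $P$, and the same spacing constraints, combined with the block-by-block bookkeeping from the first reduction, bound $|E(G)|$ by $f(n)$. The extra unit (a path on $m+3$ vertices rather than a cycle on $m+2$) reflects the fact that the extremal clique-tree carries a path threading two adjacent $K_{m+1}$ blocks through their shared cut-vertex. I expect the path estimate to be slightly more delicate than the cycle estimate, since one cannot invoke $2$-connectivity directly and must instead track the contribution of each block explicitly.
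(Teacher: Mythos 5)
This is a cited result (Woodall's theorem, reference \cite{Woodall}); the paper offers no proof of it, so the only question is whether your outline would stand on its own. It would not, because the one step that carries all the difficulty is named but never executed. Your reduction to the $2$-connected (or connected) case via superadditivity of $f(n)=t\binom{m+1}{2}+\binom{r}{2}$ is fine and routine --- indeed $\binom{r_1}{2}+\binom{r_2}{2}=\binom{r_1+r_2-1}{2}-(r_1-1)(r_2-1)$ makes the cut-vertex inequality immediate. But the core claim, that a $2$-connected graph of order $n$ with $c(G)\le m+1$ has at most $f(n)$ edges, is exactly the content of the theorem, and the fan/Menger argument you sketch does not deliver it. Counting at most $\lfloor \ell/2\rfloor$ feet on a longest cycle $C$ of length $\ell\le m+1$ for each vertex off $C$, plus $\binom{\ell}{2}$ edges inside $C$, gives at best $\tfrac{m+1}{2}(n-1)$, which is the Erd\H{o}s--Gallai bound of Theorem~\ref{thm4} and is \emph{strictly weaker} than $f(n)$ once $m\ge 3$ (e.g.\ $m=3$, $n=9$: Erd\H{o}s--Gallai gives $16$, Woodall gives $15$). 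You acknowledge this and defer to ``a Kopylov-type vertex-weighting,'' but that weighting is precisely the missing proof; asserting that the coarse bound ``should yield'' $f(n)$ after sharpening is not an argument.

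The detour statement has the same problem compounded. You invoke P\'osa rotations to show $p(G)\ge m+3$, but you never explain where the extra unit comes from (a path on $m+3$ vertices versus a cycle on $m+2$), nor how the rotation endpooint-degree bounds convert into the exact count $f(n)$ rather than an Erd\H{o}s--Gallai-type average-degree bound; the standard trick of adding a dominating vertex to transfer the cycle bound to paths changes both $n$ and $|E(G)|$ and does not obviously preserve the threshold $f(n)$. In short, the scaffolding (block decomposition, convexity of $f$, identification of the extremal clique-trees) is correct and is the standard frame for such results, but the two quantitative lemmas on which the theorem actually rests are left as declared intentions. Since this theorem is used in the paper only as a black box from the literature, the honest course is either to cite Woodall as the paper does or to carry out the sharpened counting in full.
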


\begin{thm}[Ore's Theorem,\cite{Ore}]\label{thm5.1}
If $G$ is a graph of order $n\geq 3$ such that $d(u)+d(v)\geq n$ for any pair $u$, $v$ of nonadjacent vertices in $G$, then $G$ is Hamiltonian.
\end{thm}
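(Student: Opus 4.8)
The plan is to argue by contradiction using an edge-maximality reduction followed by the classical rotation/crossing argument. First I would suppose that $G$ satisfies the degree-sum hypothesis yet fails to be Hamiltonian, and among all such graphs on the vertex set $V(G)$ I would pass to one that is edge-maximal; equivalently, I add edges to $G$ one at a time, stopping just before any Hamiltonian cycle appears. Since adding edges only raises vertex degrees, the condition $d(u)+d(v)\ge n$ for nonadjacent $u,v$ is preserved throughout, so the resulting graph $G$ still satisfies the hypothesis, is non-Hamiltonian, but has the property that joining any two nonadjacent vertices creates a Hamiltonian cycle. As $K_{n}$ is Hamiltonian for $n\ge 3$, this maximal $G$ is not complete, so it contains a pair of nonadjacent vertices $u$ and $v$.

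Next I would exploit this pair. Adding the edge $uv$ produces a Hamiltonian cycle, and that cycle must use $uv$ (otherwise $G$ itself would already be Hamiltonian); deleting $uv$ leaves a Hamiltonian path of $G$ joining $u$ and $v$. Write this path as $v_{1}v_{2}\cdots v_{n}$ with $v_{1}=u$ and $v_{n}=v$. The heart of the argument is then a pigeonhole count. I would introduce the two index sets
$$S=\{\,i : 1\le i\le n-1,\ uv_{i+1}\in E(G)\,\}, \qquad T=\{\,i : 1\le i\le n-1,\ vv_{i}\in E(G)\,\},$$
so that $|S|=d(u)$ and $|T|=d(v)$, since every neighbour of $u=v_{1}$ (respectively of $v=v_{n}$) along the path contributes exactly one index. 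Both $S$ and $T$ are subsets of $\{1,\dots,n-1\}$, whence $|S\cup T|\le n-1$, while the hypothesis gives $|S|+|T|=d(u)+d(v)\ge n$. Inclusion--exclusion then forces $|S\cap T|=|S|+|T|-|S\cup T|\ge n-(n-1)=1$, so some index $i$ lies in $S\cap T$.

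Finally I would reassemble a Hamiltonian cycle from such an index. For $i\in S\cap T$ we have both $uv_{i+1}\in E(G)$ and $vv_{i}\in E(G)$, and then
$$v_{1}v_{2}\cdots v_{i}\,v_{n}v_{n-1}\cdots v_{i+1}\,v_{1}$$
is a spanning cycle: it follows the path from $v_{1}$ to $v_{i}$, jumps to $v_{n}$ along the chord $v_{i}v_{n}=v_{i}v$, runs backward to $v_{i+1}$, and closes via the chord $v_{i+1}v_{1}=v_{i+1}u$. Checking that the two arcs $v_{1},\dots,v_{i}$ and $v_{i+1},\dots,v_{n}$ partition the vertex set shows every vertex occurs exactly once, so this is a Hamiltonian cycle, contradicting the non-Hamiltonicity of $G$. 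The step that requires genuine care is precisely this crossing/reassembly construction: one must define the index sets so that a common index yields two chords (the successor of a $u$-neighbour together with a $v$-neighbour) that splice the Hamiltonian path into a single spanning cycle rather than into two disjoint pieces, and then verify that no vertex is repeated or omitted. By comparison, the edge-maximality reduction and the pigeonhole estimate are routine.
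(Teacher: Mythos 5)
Your argument is correct and complete: the edge-maximal counterexample reduction, the Hamiltonian $u$--$v$ path obtained by deleting the added edge $uv$, the index sets $S$ and $T$ with $|S|=d(u)$, $|T|=d(v)$ inside $\{1,\dots,n-1\}$, and the crossing construction $v_1\cdots v_i v_n v_{n-1}\cdots v_{i+1}v_1$ together give the classical proof of Ore's theorem. The paper itself states this result only as a citation to Ore and supplies no proof, so there is nothing to compare against; your write-up is the standard argument and is sound as it stands.
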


We shall use the following corollary of Ore's Theorem.

\begin{cor}
Assume $G$ is a graph of order $n$ and
$$|E(G)|\geq \binom{n-1}{2}+2.$$
Then $G$ is Hamiltonian.
\end{cor}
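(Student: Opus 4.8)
The plan is to verify the hypothesis of Ore's Theorem (Theorem~\ref{thm5.1}) by a direct edge-counting argument, which then immediately forces $G$ to be Hamiltonian. First I would dispose of the trivial range of $n$: for the assumption $|E(G)|\geq\binom{n-1}{2}+2$ to be satisfiable at all we need $\binom{n-1}{2}+2\leq\binom{n}{2}$, and since $\binom{n}{2}-\binom{n-1}{2}=n-1$ this already forces $n\geq 3$, so Theorem~\ref{thm5.1} applies.

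The core of the argument is to show that every pair of nonadjacent vertices $u,v$ satisfies $d(u)+d(v)\geq n$. I would argue by contradiction: suppose some nonadjacent pair has $d(u)+d(v)\leq n-1$. Since $uv\notin E(G)$, no single edge is incident to both $u$ and $v$, so the number of edges meeting $\{u,v\}$ is \emph{exactly} $d(u)+d(v)$. Every remaining edge lies in $G-\{u,v\}$, which has $n-2$ vertices and hence at most $\binom{n-2}{2}$ edges. Combining these two counts gives
\[
|E(G)|\leq d(u)+d(v)+\binom{n-2}{2}\leq (n-1)+\binom{n-2}{2}.
\]

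Using the identity $\binom{n-1}{2}=\binom{n-2}{2}+(n-2)$ from Proposition~\ref{Proposition2.5}, the right-hand side equals $\binom{n-1}{2}+1$, which is strictly smaller than $\binom{n-1}{2}+2$, contradicting the hypothesis. Hence $d(u)+d(v)\geq n$ for every nonadjacent pair, and Ore's Theorem yields that $G$ is Hamiltonian.

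I do not anticipate any genuine obstacle here, since the whole proof reduces to a single counting estimate fed into Ore's Theorem. The one point that warrants care is the observation that, precisely because $u$ and $v$ are nonadjacent, the edges incident to $u$ or $v$ contribute $d(u)+d(v)$ with no overlap (and not $d(u)+d(v)-1$); this is exactly where the nonadjacency of the pair is used, and it is what makes the bound sharp enough to reach the contradiction.
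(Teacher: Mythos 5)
Your proof is correct, and it follows exactly the route the paper intends: the corollary is stated there as a consequence of Ore's Theorem (with no written proof), and your edge-counting argument showing that any nonadjacent pair with $d(u)+d(v)\leq n-1$ would force $|E(G)|\leq \binom{n-1}{2}+1$ is the standard way to fill in that gap. The arithmetic and the observation that nonadjacency makes the two degree counts disjoint are both handled properly.
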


\section{Minimum degree and size conditions for the proper connection number of graphs}

We first establish an upper bound for the function $g(n,k)$ defined in Section $1$.

\begin{thm}\label{thm7}
Let $G$ be a connected graph of order $n$, $k\geq 3$. If $|E(G)|\geq \binom{n-m-(k+1-m)(\delta+1)}{2} +(k+1-m)\binom{\delta+1}{2}+k+2$, then $pc(G)\leq k$. Where if $\delta=1$, then $m=k+1$. If $\delta \geq 2$, then $m=\lfloor \frac{k}{\delta-1} \rfloor$.
\end{thm}

\begin{proof}
We first note that the statement is true for $\delta=1$ by Theorem~\ref{thm3.1}. In the following, we show that it is true for $\delta \geq 2$.

If $G$ is a connected bridgeless graph, then $pc(G)\leq 3 \leq k$ by Theorem~\ref{thm2}. So we assume that $G$ is connected with $t\geq 1$ bridges. We consider two cases.

{\bf Case 1.}\quad $t\leq k$.

In this case, $pc(G) \leq \max \{3, \Delta(G^{*})\} \leq \max\{3, t\} \leq \max\{3, k\} = k$ by Theorem~\ref{thm3}.

{\bf Case 2.}\quad $t\geq k+1$.

By Lemma~\ref{lem2.6}, we have $|E(G)|\leq \binom{n-m-(t-m)(\delta+1)}{2} +(t-m)\binom{\delta+1}{2}+t$, where $m=\lfloor \frac{t-1}{\delta-1} \rfloor$.

Suppose $\frac{t-1}{\delta-1}$ is not a positive integer (i.e., $\frac{t-2}{\delta-1}=\frac{t-1}{\delta-1}$), then
\begin{equation*} \label{eq:1}
\begin{split}
|E(G)|& \leq
\binom{n-m-(t-m)(\delta+1)}{2} +(t-m)\binom{\delta+1}{2}+t\\
&=\binom{n-m-(t-1-m)(\delta+1)}{2}-(\delta+1)(n-m-(t-m)(\delta+1))\\
&-\binom{\delta+1}{2}+(t-1-m)\binom{\delta+1}{2}+\binom{\delta+1}{2}+(t-1)+1\\
&=\binom{n-m-(t-1-m)(\delta+1)}{2} +(t-1-m)\binom{\delta+1}{2}\\
&+(t-1)-(\delta+1)(n-m-(t-m)(\delta+1))+1\\
&\leq \binom{n-m-(t-1-m)(\delta+1)}{2} +(t-1-m)\binom{\delta+1}{2}+(t-1)
 \end{split}
 \end{equation*}

 Suppose $\frac{t-1}{\delta-1}$ is  a positive integer (i.e., $\frac{t-2}{\delta-1}=\frac{t-1}{\delta-1}-1$), then
\begin{equation*} \label{eq:1}
\begin{split}
|E(G)|&\leq
\binom{n-m-(t-m)(\delta+1)}{2} +(t-m)\binom{\delta+1}{2}+t\\
&=\binom{n-(m-1)-(t-m)(\delta+1)}{2}-(n-m-(t-m)(\delta+1))\\
&+(t-m)\binom{\delta+1}{2}+t\\
&\leq \binom{n-(m-1)-(t-1-(m-1))(\delta+1)}{2}-(n-(m-1)\\
&-(t-1-(m-1))+(t-1-(m-1))\binom{\delta+1}{2}+(t-1)+1\\
&\leq \binom{n-m-(t-1-m)(\delta+1)}{2} +(t-1-m)\binom{\delta+1}{2}+(t-1)
 \end{split}
 \end{equation*}
Hence,   $|E(G)|$ is monotonic  decreasing on the bridges $t$, then if $t=k+1$, $|E(G)|$ is maximum and $|E(G)|\leq \binom{n-m-(k+1-m)(\delta+1)}{2} +(k+1-m)\binom{\delta+1}{2}+k+1$, a contradiction.
\end{proof}

\begin{rem}If $\delta\geq t$ (i.e., $m=0$), then  we construct a graph $G_{k}$ as follows: Take a complete graph $K_{n-(k+1)(\delta+1)}$ and $k+1$ complete graph $K_{\delta+1}$. Now a vertex of $K_{n-(k+1)(\delta+1)}$ and an arbitrary vertex of $(k+1)$ complete graph $K_{\delta+1})$ are joined. The resulting graph $G_{k}$ has order $n$, size $|E(G_{k})|=\binom{n-(k+1)(\delta+1)}{2} +(k+1)\binom{\delta+1}{2}+k+1$, and proper connection number $pc(G_{k})\geq k+1-m+m>k$. Therefore, $g(n,k)=\binom{n-(k+1)(\delta+1)}{2}+(k+1)\binom{\delta+1}{2}+k+2$, $k\geq 3$.
\end{rem}

Finally, we consider the case $k=2$. When $n\leq 2n+1$, then  $pc(G)\leq 2$ by Theorem~\ref{thm Dirac}.

\begin{thm}[\cite{Huang3}]\label{thm11}
Let $G$ be a connected noncomplete graph of order $5\leq n\leq 8$, if $G\notin\{G_{1},G_{8}\}$ and $\delta(G)\geq2$, then $pc(G)=2$.
\end{thm}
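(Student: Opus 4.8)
The lower bound is immediate: as $G$ is noncomplete, Proposition~\ref{Proposition 1.}(2) gives $pc(G)\geq 2$, so the entire task is to prove $pc(G)\leq 2$. The organising principle throughout is Proposition~\ref{Proposition 1.}(5): every traceable graph has $pc\leq 2$. Hence I would first dismiss all traceable graphs in a single step, and devote the argument to the non-traceable members of the family, which are few because the order is at most $8$.

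I would begin with the base case $n=5$. Here $\delta(G)\geq 2=\tfrac{n-1}{2}$, so Theorem~\ref{thm Dirac} produces a Hamilton path and $pc(G)\leq 2$; in particular no exceptional graph can occur at $n=5$. For $6\leq n\leq 8$ the main engine is Proposition~\ref{Proposition3}: since $\delta(G)\geq 2$, every vertex already satisfies the hypothesis $d(v)\geq 2$, so it suffices to exhibit a vertex $v$ with $G-v$ connected and $pc(G-v)\leq 2$, the latter holding whenever $G-v$ is complete, traceable, or a strictly smaller instance already covered by the statement. The guiding heuristic is thus to peel off one vertex at a time, landing on a traceable graph or on the $n=5,6,7$ cases.

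To control the non-traceable graphs I would split on whether $G$ has a bridge. If $G$ is bridgeless, Theorem~\ref{thm2} already gives $pc(G)\leq 3$, and one must sharpen this to $2$; the representative hard case is a $2$-connected non-traceable graph such as $K_{2,4}$ at $n=6$, where no Hamilton path exists but deleting a degree-$2$ vertex leaves the traceable graph $K_{2,3}$, so Proposition~\ref{Proposition3} applies. If $G$ has a bridge, I would pass to the bridgeless-component tree $G^{*}$ and invoke Theorem~\ref{thm3} to get $pc(G)\leq\max\{3,\Delta(G^{*})\}$; since $\delta\geq 2$ together with $n\leq 8$ severely restricts the component sizes and the shape of $G^{*}$, the bound again has to be pushed from $3$ down to $2$ by deleting an interior vertex of a nontrivial bridgeless component and applying Proposition~\ref{Proposition3}. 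Keeping track of which deletions preserve connectivity and steer clear of the exceptional list is exactly what isolates the two genuine obstructions.

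The crux, and the step I expect to be hardest, is the finite but delicate analysis for $n=7$ and $n=8$ showing that $G_1=K_1\vee 3K_2$ and $G_8$ are the \emph{only} graphs for which every admissible deletion either disconnects $G$, creates a pendant vertex that blocks the recursion, or returns another exception, so that the reduction cannot be closed. For these two I would verify directly that $pc=3$: both are non-traceable, and in $G_1$ a pigeonhole obstruction arises because the three copies of $K_2$ communicate only through the centre, so that a proper $2$-colouring would have to assign three pairwise-distinct binary ``transition labels'' to them, which is impossible; a similar finite check handles $G_8$, whose bridge prevents the attached $K_3$ from assisting the other two pairs. The real labour of the proof lies in establishing completeness of this case distinction---that no other non-traceable graph with $6\leq n\leq 8$ and $\delta\geq 2$ escapes the Proposition~\ref{Proposition3} reduction.
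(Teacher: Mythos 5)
The paper itself gives no proof of this statement --- it is imported wholesale from \cite{Huang3} --- so your attempt can only be judged on its own terms. The tools you assemble are the right ones (Proposition~\ref{Proposition 1.}(2) and (5), Theorem~\ref{thm Dirac} at $n=5$, and the deletion step of Proposition~\ref{Proposition3}), and your verification that the two exceptional graphs need three colours is sound: in $K_1\vee 3K_2$ every $u$--$w$ path between distinct triangles passes through the centre, each triangle admits only one ``arrival colour'' at the centre from at least one of its two outer vertices, and this forces a label in $\{1,2\}$ on each triangle with equal labels being incompatible, so three triangles cannot be pairwise compatible; the bridge of $G_{8}$ produces the analogous conflict. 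But the theorem's entire content is the completeness claim --- that \emph{every} other graph in the family succumbs --- and you explicitly set that aside as ``the real labour'' without performing it. That is not a routine omission; it is the proof.

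Worse, the reduction you propose is not merely tedious but genuinely insufficient as stated. Take $G$ of order $8$ obtained from $G_1=K_1\vee 3K_2$ (centre $c$, triangles $ca_ib_i$) by adding a new vertex $v$ adjacent to $a_1$ and $b_1$ only. This $G$ is connected, bridgeless, noncomplete, has $\delta=2$, and is not $G_8$, so the theorem asserts $pc(G)=2$; indeed a direct $2$-colouring works (colour $ca_1,a_1b_1,vb_1$ with $1$ and $cb_1,va_1$ with $2$, make the spokes of triangle $2$ both colour $1$ and those of triangle $3$ both colour $2$). Yet every deletion fails your recursion: $G-c$ is disconnected; $G-v=G_1$ is the exception; and deleting any of $a_1,b_1,a_2,b_2,a_3,b_3$ leaves a graph that is non-traceable (the centre is still a cut vertex separating three pieces), has a vertex of degree $1$, and so is covered neither by Proposition~\ref{Proposition 1.}(5) nor by the induction hypothesis. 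So on top of the unexecuted finite enumeration for $n=6,7,8$, your framework needs a further mechanism --- explicit colourings or a stronger reduction lemma --- to handle graphs like this one. As it stands the proposal is a correct plan with a correctly verified exceptional set, not a proof.
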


The two attentional graphs in Theorem \ref{thm11} and the following.

$G_{1}=K_{1}\vee 3K_{2}$ and  $G_{8}$ is obtained by taking a complete graph $K_{3}$  and $K_{1}\vee (2K_{2}$) with an arbitrary vertex of $K_{3}$ and a vertex with $d(v)=4$ in $K_{1}\vee (2K_{2}$) being joined.

\begin{thm}\label{thm8}
Let $G$ be a simple connected graph of order $n$ ($n\geq6$), $k=2$, $\delta=2$. If $|E(G)|\geq \binom{n-5}{2} +7$, then $pc(G)\leq 2$ unless $G\in \{G_{1}, G_{n}\}$,  where $G_{1}=K_{1}\vee 3K_{2}$ and  $G_{n}$ is obtained by taking a complete graph $K_{n-5}$  and $K_{1}\vee (2K_{2}$) with an arbitrary vertex of $K_{n-5}$ and a vertex with $d(v)=4$ in $K_{1}\vee (2K_{2}$) being joined.
\end{thm}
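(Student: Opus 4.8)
The plan is to prove the statement by induction on $n$, using Proposition~\ref{Proposition3} as the engine that passes between $G$ and a vertex-deleted subgraph, with the base cases $6\le n\le 8$ handled by Theorem~\ref{thm11}; note that the graph $G_n$ at $n=8$ is exactly $G_8$, that $G_1$ occurs only at $n=7$, and that a complete $G$ has $pc=1$. The guiding principle is Proposition~\ref{Proposition 1.}(5): it suffices to produce a Hamiltonian path of $G$, for then $pc(G)\le 2$. Thus the whole task is to show that a graph meeting the size bound is either traceable, or reducible to a smaller instance by deleting a degree-$2$ vertex, or else one of $G_1,G_n$.

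First I would bound the bridges. Writing $t$ for the number of bridges and specializing Lemma~\ref{lem2.6} to $\delta=2$ (so that $m=t-1$ once $t\ge1$), the monotonicity computation used in the proof of Theorem~\ref{thm7} gives $|E(G)|\le\binom{n-5}{2}+6$ as soon as $t\ge3$, which contradicts the hypothesis; hence $t\le2$. Consequently the block-tree $G^{*}$ is a path on at most three $2$-edge-connected blocks, and since $\sum_i\binom{n_i}{2}\ge |E(G)|-t\ge\binom{n-5}{2}+5$, convexity forces one block $C_1$ to be nearly complete and of order close to $n$, the other block(s) being small.

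There are then two ways to finish. If the near-complete block $C_1$ has minimum degree at least $(|C_1|+1)/2$ it is Hamiltonian-connected by Theorem~\ref{thm Dirac}, and I would concatenate a Hamiltonian path of $C_1$ ending at its bridge-endpoint with Hamiltonian paths of the small blocks issuing from their bridge-endpoints (certifying the latter directly or through Theorem~\ref{thm5}); this produces a Hamiltonian path of $G$ and hence $pc(G)\le2$. Alternatively, if $G$ carries a degree-$2$ vertex $v$ whose deletion keeps $G-v$ connected with $\delta(G-v)\ge2$ and $G-v\notin\{G_1,G_{n-1}\}$, then $d(v)=2\le n-6$ guarantees $G-v$ still meets the size bound for order $n-1$, so the induction hypothesis yields $pc(G-v)\le2$ and Proposition~\ref{Proposition3} lifts this to $pc(G)\le2$.

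The hard part will be the interaction of these two routes through the degree-$2$ vertices, which are exactly what can break both of them: a pair of nonadjacent degree-$2$ vertices has degree sum $4\ll n$, so Dirac/Ore-type arguments fail on any block containing them, while deleting a vertex adjacent to a degree-$2$ vertex destroys $\delta\ge2$ and deleting a clique vertex of $G_n$ merely returns $G_{n-1}$. I would therefore analyze the set $L$ of degree-$2$ vertices, show from the density bound that it is small, and prove that whenever $G$ is non-traceable and admits no admissible deletion, the vertices of $L$ must form several disjoint edges all attached to a single vertex --- that is, a copy of $K_1\vee 2K_2$ hung by its centre on the near-complete block (giving $G_n$), or the hub graph $K_1\vee 3K_2$ (giving $G_1$). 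For these two graphs I would check directly that the centre is forced to be an interior vertex of every Hamiltonian path and, more to the point, that no proper $2$-edge-colouring can simultaneously connect the endpoints of two distinct pendant edges through the forced cut vertex, so that $pc=3$. Establishing that this degenerate configuration is the only obstruction to an admissible deletion --- equivalently, that every other dense graph is reducible or traceable --- is the real content of the argument.
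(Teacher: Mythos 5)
Your outline diverges from the paper's proof in a substantive way: the paper also inducts on $n$ with base cases from Theorem~\ref{thm11} and also uses Proposition~\ref{Proposition3} as the lifting device, but it organizes the induction step by the detour number $p(G)$, treating $p(G)=n-1,\dots,n-4$ by counting non-edges of $\bar G$ forced by a longest path (so that some vertex $v$ off the path, or an endpoint $v_1,v_{n-2}$, has $d(v)\le n-6$ and can be deleted while preserving the size bound), and then kills the residual case $p(G)\le n-5$ outright with Woodall's theorem (Theorem~\ref{thm5}), which shows the size hypothesis forces $p(G)\ge n-4$ for $n>20$ (with small $n$ checked ad hoc). Your plan instead goes through the bridge/block structure; the preliminary step $t\le 2$ via Lemma~\ref{lem2.6} is correct, and the deletion route (b) is essentially the same mechanism the paper uses.

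However, there is a genuine gap, and you essentially name it yourself: the entire difficulty of the theorem is the claim that every graph meeting the size bound which is neither traceable nor reducible by an admissible degree-$2$ deletion must be $G_1$ or $G_n$, and your proposal asserts this classification rather than proving it. Your route (a) does not substitute for it: ``$C_1$ nearly complete'' does not give $\delta(C_1)\ge(|C_1|+1)/2$, since the degree-$2$ vertices of $G$ can lie inside the big block (e.g.\ $K_{n-3}$ with three vertices of degree $2$ attached is bridgeless, satisfies the size bound, and has $\delta=2$, so Dirac/Ore and Theorem~\ref{thm Dirac} never apply to it); so in all the hard configurations you are thrown back on the unproved classification. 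You also have no analogue of the paper's use of Theorem~\ref{thm5}, which is what rules out the possibility that $G$ is dense yet has no long path at all --- without some such ingredient the induction has no way to get started on graphs where no single deletion works and no Hamiltonian path is available. Finally, the concatenation in route (a) silently requires the small blocks to be traceable between prescribed bridge-endpoints, which is an additional unverified step. As written, the proposal is a plausible strategy sketch, not a proof.
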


\begin{proof}

If $k=2$, $\delta=2$, then $m=2$. We observe that the requirement $|E(G)|\geq \binom{n-5}{2} +7$ is equivalent to the requirement $|E(\bar{G})|\leq 5n-22$ where $\bar{G}$ is the complement of $G$.

Now we suppose $6 \leq n\leq 8$, the result holds by Theorem \ref{thm11}.
%
%
%

We now assume that $n\geq 9$ and proceed by induction on $n$. We consider five cases.

{\bf Case 1.}\quad $p(G)=n-1$.

Let $P=v_{1} v_{2} \ldots v_{n-1}$ be a path of order $n-1$ in $G$ and let $w$ be the vertex not contained in $P$. As $\delta=2$, then $d(w)\geq 2$. It follows from Proposition~\ref{Proposition3} that $pc(G)\leq 2$, a contradiction.

\textbf{Claim.} If $|E(G)|\geq \binom{n-6}{2} +7$ ($n\geq9$) with order $n-1$ of $G$, and $pc(G)\leq 2$ unless $G_{n-1}$. Let $G^{\prime}=G+v$ and $d_{G^{\prime}}(v)=n-6$, then $pc(G^{\prime})\leq 2$ or  $G^{\prime}=G_{n}$.

\begin{proof} We suppose $G^{\prime}\neq G_{n}$, there exist $v_{i}$ which does not belong to  $V(K_{n-5})$ such that $vv_{i}\in E(G)$, then 
$p(G^{\prime})\geq n-1$. Hence $pc(G^{\prime})\leq 2$ according to Case 1 and Proposition~\ref{Proposition 1.}.
\end{proof}

{\bf Case 2.}\quad $p(G)=n-2$.

Let $P=v_{1} v_{2} \ldots v_{n-2}$ be a path of order $n-2$ in $G$ and let $w_{1}$, $w_{2}$ be the two vertices not contained in $P$. Since $\delta=2$, we have $d(w_{1})\geq 2$ and $d(w_{2})\geq 2$. Now assume $d_{p}(w_{2})\geq 2$ or $d_{p}(w_{2})\geq 2$,  without loss of generality, we assume $d_{p}(w_{2})\geq 2$. Since  $P$ is a longest path in $G$, then $w_{2}$ does not have consecutive neighbours on $P$ and  neither $v_{1}$ nor $v_{n-2}$ is a neighbour of $w_{2}$. And if $w_{1}v_{a}\in E(G)$, then $v_{1}v_{a+1}\in E(\bar{G})$. We have $d(w_{2})\leq \frac{n-5}{2}+1\leq n-6$ for $n\geq9$. Then $pc(G-w_{1})\leq2$  by our induction hypothesis. Thus, by applying Propositions~\ref{Proposition3}, we obtain that $pc(G)\leq2$.

Hence we may assume that $d_{p}(w_{1})=1$ and $d_{p}(w_{2})=1$, then $w_{1}$ and $w_{2}$ are adjacent. Let $w_{1}v_{a}, w_{2}v_{b}\in E(G)$, then $d(v_{1})\leq n-6$ and $d(v_{n-2})\leq n-6$, since $v_{1}v_{a+1}, v_{1}v_{a+2}, v_{1}v_{b+1}, v_{1}v_{b+2}$, $v_{1}w_{1}, v_{1}w_{2}, v_{n-2}v_{a-1}, v_{n-2}v_{a-2}, v_{n-2}v_{b-1}, v_{n-2}v_{b-2}, v_{n-2}w_{1}, v_{n-2}w_{2}, v_{1}v_{n-2}, v_{1}v_{n-3}, v_{2}v_{n-2}\in E(\bar{G})$. Then $|E(G-v)|\geq \binom{n-5}{2} +7-(n-6)=\binom{n-6}{2} +7$ ($v\in\{v_{1},v_{n-2}\}$) by Propositions~\ref{Proposition2.5}. If $\delta_{G-v}\geq 2$, then, by our induction hypothesis,  $pc(G-v)\leq 2$.  We obtain that $pc(G)=2$ by applying Propositions~\ref{Proposition3}. Thus we may assume $\delta_{G-v}=1$, then $d_{G}(v_{2})=d_{G}(v_{n-3})=d_{G}(w_{1})=d_{G}(w_{2})=2$ and $d_{G}(v_{1})> \frac{n}{2}$. There exist $v_{i}$ ($i$ is even) such that $v_{1}v_{i}\in E(G)$. Take a graph $G^{\prime}$ by adding  $v_{2}v_{i}$ in $G-v_{1}$. Note $\delta_{G^{\prime}}=2$, we have $G^{\prime}=G_{8}$ or $pc(G^{\prime})\leq 2$.  If  $pc(G^{\prime})\leq 2$, then $pc(G-v)\leq 2$ by our induction hypothesis and $pc(C)\leq2$, where $C=v_{1},\ldots,v_{i}$ is an even cycle. If $G^{\prime}=G_{8}$, then $p(G)\geq n-1$, a contradiction. Hence, $pc(G)\leq2$.



{\bf Case 3.}\quad $p(G)=n-3$.

Let $P=v_{1} v_{2} \ldots v_{n-3}$ be a path of order $n-3$ in $G$ and let $w_{1}$, $w_{2}$, $w_{3}$ be the three vertices not contained in $P$. 
 First prove that exist $v$, where $v\in\{w_{1}, w_{2}, w_{3}, v_{1}, v_{n-3}\}$, such that $\delta_{G-v}\geq 2$. Suppose $\delta_{G-v}=1$, for all $v\in\{w_{1}, w_{2}, w_{3}, v_{1}, v_{n-3}\}$, then $d(v_{2})=d(v_{n-4})=2$ and exist two vertices such that $d(v)=2$ for $v\in\{w_{1},w_{2},w_{3}\}$, without loss of generality, we assume $d(w_{1})=d(w_{2})=2$. In fact, if $w_{3}v_{a}\in E(G)$, then $v_{1}v_{a+1}\in E(\bar{G})$. Hence $d(w_{3})+d(v_{1})\geq n$. then $|E(\bar{G})|\geq 4(n-3)-8+n=5n-20$, a contradiction. Since $d(w_{i})\leq \frac{n-7}{2}+2<n-6$ for $i=1,2,3$  and $d(v_{1})$($d(v_{n-3})$)$\leq n-6$. Then $|E(G-v)|\geq \binom{n-5}{2} +7-(n-6)=\binom{n-6}{2} +7$ by Propositions~\ref{Proposition2.5}. Hence, by our induction hypothesis,  $pc(G-v)\leq 2$.  We obtain that $pc(G)\leq2$ by applying Propositions~\ref{Proposition3}.

{\bf Case 4.}\quad $p(G)=n-4$.

Let $P=v_{1} v_{2} \ldots v_{n-4}$ be a path of order $n-4$ in $G$ and let $w_{1}$, $w_{2}$, $w_{3}$, $w_{4}$ be the four vertices not contained in $P$.  We obtain that $pc(G)=2$, similar to Case 3.

{\bf Case 5.}\quad $p(G)\leq n-5$.

If $n=9$, $|E(G)|\geq 13$, so $G$ contains a cycle of length at least $3$. Since $p(G)\leq 4$, this implies that $c(G)=3$. Hence $|E(G)|\leq 4\binom{3}{2}=12$, a contradiction.

If $n=10$, $|E(G)|\geq 17$. By Thorem~\ref{thm5} (by taking $t=4$, $m=2$, $r=2$), we obtain $c(G)\geq 4$. Since $p(G)\leq 5$, this implies that $c(G)=4$ and the longest cycle can only exist one. Then it is easily seen that $pc(G)\leq 2$, a contradiction.

If $n=11$, then $|E(G)|\geq 21$. By Thorem~\ref{thm5} (by taking $t=3$, $m=3$, $r=2$), we obtain $c(G)\geq 5$. Since $p(G)\leq 6$, this implies that $c(G)=5$ and the longest cycle can only exist one. Then $|E(G)|\leq \binom{5}{2} +3\cdot 3=19<21$, a contradiction.

If $12\leq n\leq 20$, by the arguments similar to that of $n=11$, we have the contradicting on $|E(G)|\geq \binom{n-5}{2} +7$.

If $n>20$, then $|E(G)|\geq \binom{n-5}{2} +7=\binom{n-6}{2} +(n-6)+7>\binom{n-6}{2} +\binom{7}{2}$ and then it follows from Theorem~\ref{thm5} (by taking $t=1$, $m=n-7$, $r=7$) that $p(G)\geq m+3=n-4$, contradicting our assumption on $p(G)$.
\end{proof}



\begin{rem}
The way is not applicable for $k=2$ and $\delta \geq 3$, since $\delta_{G-v}=\delta_{G}-1$ and the vertex belonging to $P$ with the degree being $\delta_{G-v}$ ($\delta_{G}-1$) in $G-v$ ($G$) is possible and $|E(G)|$ reduces  along with the bridges $\delta$  growth. We conjecture the following.
\end{rem}

\begin{Conjecture}\label{Conjecture3.4}
 Let $G$ be a connected graph of order $n$, $k=2$ and $\delta \geq 3$. If $|E(G)|\geq \binom{n-m-(3-m)(\delta+1)}{2}+(3-m)\binom{\delta+1}{2}+4$, then $pc(G)\leq 2$, where $m$ takes the value $1$ if $\delta=3$ and $0$ if $\delta\geq4$. 
\end{Conjecture}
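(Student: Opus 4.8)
The plan is to run the same scheme as the proof of Theorem~\ref{thm8}: induct on $n$ and split according to the detour number $p(G)=n-j$, taking as base cases the small orders, where the classification of Theorem~\ref{thm11} applies and where the two exceptional graphs $G_1,G_8$ (both of minimum degree $2$) do not occur once $\delta\ge 3$. At one end of the case analysis, $j=0$ gives a Hamiltonian path and hence $pc(G)\le 2$ by Proposition~\ref{Proposition 1.}(5), while $j=1$ is settled by noting that the unique off-path vertex $w$ has $d(w)\ge\delta\ge 2$ and that $G-w$ is traceable, so that $pc(G-w)\le 2$ and Proposition~\ref{Proposition3} applies. At the other end, for all $j$ larger than some bound $j_0=j_0(\delta)$ growing linearly in $\delta$, I would contradict the hypothesis $|E(G)|\ge\binom{n-m-(3-m)(\delta+1)}{2}+(3-m)\binom{\delta+1}{2}+4$ by Woodall's Theorem~\ref{thm5}, choosing the Woodall parameters as in Case~5 of Theorem~\ref{thm8} so as to force $p(G)>n-j$; the finitely many residual small orders would be checked directly, exactly as in the range $9\le n\le 20$ there.

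The heart of the argument is the intermediate window $2\le j\le j_0$. For each such $j$ I would try to exhibit a vertex $v$ with $d_G(v)\ge 2$ for which $G-v$ still satisfies the hypothesis at order $n-1$, so that $pc(G-v)\le 2$ by induction and $pc(G)\le 2$ by Proposition~\ref{Proposition3}. The candidates are the off-path vertices $w_1,\dots,w_j$, whose deletion preserves the longest path $P$ and lowers the deficiency to $j-1$, and the endpoints $v_1,v_{n-j}$, whose deletion is paid for by the block of forced non-edges at that endpoint, since $P$ being longest prevents $v_1$ and $v_{n-j}$ from seeing the vertices just after their neighbours on $P$. As in Theorem~\ref{thm8}, I would translate these forced non-edges into the complement and use the identities of Proposition~\ref{Proposition2.5} to confirm that $|E(G-v)|$ still clears the reduced threshold.

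The hard part will be precisely the difficulty recorded in the Remark preceding the statement: deleting $v$ may lower the minimum degree, $\delta_{G-v}=\delta-1$, whenever $v$ is adjacent to a vertex of degree $\delta$. For $\delta\ge 4$ this is benign, since $G-v$ retains $\delta_{G-v}\ge 3$ and falls under the $m=0$ case of the induction; the genuine obstruction is $\delta=3$, where the drop creates a degree-$2$ vertex and ejects $G-v$ from the $\delta\ge 3$ regime. To cope I would either choose $v$ among the off-path vertices so as to avoid the neighbours of the degree-$3$ vertices whenever the edge count leaves room, or, when it does not, imitate the edge-addition device of Case~2 of Theorem~\ref{thm8}: delete the offending vertex, add a compensating edge between two of its neighbours to restore a minimum degree of $2$, extract an even cycle through the relevant endpoint (which is properly $2$-colourable), and reassemble a proper $2$-colouring of $G$ through the monotonicity Lemma~\ref{lem2.1}, after verifying that $G-v$ is neither exceptional graph of Theorem~\ref{thm8}. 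The real reason this stays a conjecture is that these repairs must be performed uniformly in $\delta$ while reconciling three mutually incomparable edge thresholds, the $\delta\ge 3$ threshold, its $\delta-1$ reduction, and the $\delta=2$ threshold $\binom{n-6}{2}+7$ of Theorem~\ref{thm8}, and that the number of cases $2\le j\le j_0(\delta)$ grows with $\delta$; a clean proof most plausibly demands replacing the case-by-case induction with a structural description of the near-threshold graphs of minimum degree $3$ whose proper connection number exceeds $2$, after which the reduction should become automatic.
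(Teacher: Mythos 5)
The statement you are asked to prove is stated in the paper as Conjecture~\ref{Conjecture3.4}; the paper offers no proof of it, and the Remark immediately preceding it explains that the authors' own method (the induction of Theorem~\ref{thm8}) breaks down here. Your proposal is essentially that same method, and to your credit you locate the breakdown in exactly the same place the authors do: deleting a vertex $v$ can drop the minimum degree, and when $\delta=3$ the graph $G-v$ lands in the $\delta=2$ regime where the induction hypothesis no longer applies. But your proposal does not repair this, and the repairs you sketch do not work as stated. Falling back on Theorem~\ref{thm8} for $G-v$ is not available quantitatively: for $\delta=3$ the conjectured threshold is $\binom{n-9}{2}+16$, whereas Theorem~\ref{thm8} requires $\binom{n-6}{2}+7=\binom{n-9}{2}+3n-17$ edges, which is far larger for large $n$, so a graph meeting the conjecture's hypothesis need not come close to meeting the $\delta=2$ hypothesis after a vertex deletion. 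The ``choose $v$ to avoid neighbours of degree-$3$ vertices'' option is not justified either --- nothing prevents every candidate in $\{w_1,\dots,w_j,v_1,v_{n-j}\}$ from being adjacent to a degree-$3$ vertex --- and the edge-addition device of Case~2 of Theorem~\ref{thm8} is invoked only by analogy, without verifying that the resulting auxiliary graph satisfies any usable hypothesis or that an even cycle through the relevant endpoint exists.

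In short, what you have written is a plan whose essential difficulties you yourself flag as unresolved (``the real reason this stays a conjecture is \dots''), so it is not a proof, and it cannot be compared against a proof in the paper because none exists. If you want to make progress beyond the authors, the two concrete obstacles to overcome are: (i) an induction hypothesis robust under one vertex deletion when $\delta=3$, i.e.\ a statement that simultaneously covers minimum degree $2$ and $3$ with compatible edge thresholds; and (ii) a uniform-in-$\delta$ treatment of the window $2\le j\le j_0(\delta)$, since the number of cases grows with $\delta$. Your closing suggestion --- a structural characterization of near-threshold graphs with $\delta\ge 3$ and $pc>2$ --- is a sensible direction (and is consistent with the paper's final Remark citing the $\delta(G)\ge n/4$ result of Huang et al.), but it is a research programme, not an argument.
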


\begin{rem}
 We suggest a related (stronger) work for the conjecture above. Huang et al \cite{Huang3} showed  if $G$ is a connected noncomplete graph of order $n\geq 9$ and minimum degree $\delta(G)\geq n/4$, then $pc(G)=2$.
\end{rem}


\section{Acknowledgements}

The research is supported by NSFC (No.11671296) and NSF of Fujian Province (2015J05017), SRF for ROCS, SEM and Fund Program for the Scientific Activities of Selected
Returned Overseas Professionals in Shanxi Province.

\end{document}